\newcommand{\bv}{\bar{V}}
\newcommand{\hv}{\hat{V}}
\newcommand{\hi}{\hat{I}}
\newcommand{\hp}{\hat{v}}
\newcommand{\dd}{\,\mathrm{d}}
\begin{document}

\title{Dynamics of SIR model with vaccination and heterogeneous	behavioral response of individuals modeled by the Preisach operator
	\thanks{The work of Jana Kopfov\'a and Petra  Petra N\'ab\v elkov\'a  
		was supported by the institutional support for the development of research organizations I\v C 47813059.}
}
%



\author{
        Jana Kopfov\'a          \and
        Petra N\'ab\v elkov\'a \and
	Dmitrii Rachinskii      \and
        Samiha C. Rouf
}


\institute{  J. Kopfov\'a \at
              Mathematical Institute of the Silesian University, NaRybn\' i\v cku 1, 746 01 Opava, Czech Republic\\
              \email{Jana.Kopfova@math.slu.cz}       
           \and
           P. N\'ab\v elkov\'a \at
              Mathematical Institute of the Silesian University, NaRybn\' i\v cku 1, 746 01 Opava, Czech Republic\\
              \email{petra.nabelkova@math.slu.cz}
	\and
           D. Rachinskii \at
              University of Texas at Dallas, 800 W. Campbell, Richardson, Texas, 75080, United States\\
              \email{Dmitry.Rachinskiy@utdallas.edu}
	\and
           S. Rouf \at
              University of Texas at Dallas, 800 W. Campbell, Richardson, Texas, 75080, United States\\
              \email{samiha.rouf@utdallas.edu}
}

\date{Received: date / Accepted: date}

\maketitle

\begin{abstract}
\noindent   
We study global dynamics of an SIR  model with vaccination, where we assume that individuals respond differently to dynamics of the epidemic. Their heterogeneous response is modeled by the Preisach hysteresis operator. We present a condition for the global stability of the infection-free equilibrium state. If this condition does not hold true, the model has a connected set of endemic equilibrium states characterized by different proportion of infected and immune individuals. In this case, we show that every trajectory converges either to an endemic equilibrium or to a periodic orbit. Under additional natural assumptions, the periodic attractor is excluded, and we guarantee the convergence of each trajectory to an endemic equilibrium state. The global stability analysis  uses a family of Lyapunov functions corresponding to the family of branches of the hysteresis operator.
\keywords{SIR model, Preisach hysteresis operator, Lyapunov function, endemic equilibrium, periodic orbit}
\subclass{MSC 34D23 \and MSC 92D30 \and MSC  92D25}
\end{abstract}

\section{Introduction} 
In the classical compartmental models of epidemiology, the key parameters such as the transmission and vaccination rates are assumed to remain the same during an epidemic. As an example, consider a SIR model with vaccination of the form
\begin{equation}
\begin{aligned}
\label{SIR}
\dot S & = - \frac{\beta S I}{N} -v S - \mu S +\mu N,\\
\dot I & =\frac{\beta S I}{N} - \gamma I-\mu I, \\
\dot R & = \gamma I +v S- \mu R,\\
\end{aligned}
\end{equation}
where $S$, $I$ and $R$ are densities of the populations of susceptible, infected and recovered individuals, respectively;
$N=S+I+R$ is the total population density;
$\beta$ is the transmission rate, which can be expressed as
the product of the average number of daily contacts a susceptible individual has with other individuals and the probability of transmission during each contact; $v$ is the vaccination rate;  
$\gamma$ is the recovery rate; and,
$\mu$ is the birth/mortality rate
(or  immigration/emigration rate; {for childhood infections, individuals leave the group at a certain age}). The recovered individuals are assumed to be immune to the disease. 
In this particular model setup, the equality of the birth and mortality rates ensures the conservation of the total population density, i.e.\ $N$ is constant.

The key parameter is the so-called {basic reproduction number} $R_0$. For model \eqref{SIR} it equals $R_0=\beta \mu/((\gamma+\mu)(\mu+v))$. If 
$R_0 < 1$, the infection dies out in the long run; if
$R_0 > 1$,
the infection  spreads in the population (Korobeinikov et al. 2002; Korobeinikov et al. 2004; Ullah et al. 2013 ) 

Due to their simplicity, the standard SIR model and its variants, including \eqref{SIR}, assume that the hosts are unable to respond in any way to the advent of an epidemic and disregard the ability of the community to adapt its behavior to the danger. Humans, however, are able to intelligently respond to a threat as they receive and perceive information regarding the epidemic from the ``outside'' world, the government and health authorities, and can adjust their behavior to avoid or to reduce the risk of being infected. Typical aspects of this adaptability may include simple precautionary measures, such as refraining from potentially dangerous contacts, increasing hygiene, using hand sanitizer and disinfectants, wearing face mask in public places, adjusting a general life style, taking an extra portion of vitamin C in a case of a common cold, or using vaccination in a case of influenza. 
At the threat of epidemics, the government and health authorities can intervene by promoting immunizations, if available, raising awareness in the population about the current severity of the epidemic, providing access to effective and affordable medicines and tests, working with school authorities, using media and/or administrative pressure, etc.
During the covid-19 epidemics we have seen such interventions imposed by the authorities on an unprecedented scale including massive quarantine and social distancing measures, business restrictions, gathering limitations, stay-at-home and  shelter-in-place orders and closing the state borders.

In order to account for the adaptability of the population, the assumptions of the standard SIR models, which postulate constant transmission and vaccination rates during the whole time period in question, was revisited in several different ways.
The incidence rate of the modified form ${\beta S I}/({1 + a I})$ or ${\beta S I}/({1 + a S})$ was used to account for the saturation effect with saturation rate $a$. The first form is based on the assumption that  an increase in the number of infective individuals leads to a reduction of the incidence rate; the second form is associated with protective measures
taken by susceptible individuals against the infection. 
The two effects were also combined  into the incidence rate of the form   ${\beta S I}/({1 + a S + bI})$ (Dubey et al. 2015, Kaddar 2010) 
for an overview of several models. Their extensions of these models include non-pharmaceutical intervention factors such as quarantine and isolation of patients (Hou et al. 2020; Davies et al. 2020; Volpert et al 2020; Wearing et al. 2005). 
Time-dependent transmission rates were used to account for seasonal effects and varying weather conditions (Grassly et al. 2006; Liu et al. 2012).

Immunization is a proven and probably the most effective tool for controlling and eliminating infectious diseases.
It was hypothesized that  
a vaccination effort 
can be more efficient when it
is pulsed in time rather than uniform. 
The effect of pulsed vaccination policy has been studied quite extensively (Agur et al. 1993; Lu et al. 2002)where a detailed comparison of models with constant and pulsed vaccination rate is provided. 
Piecewise smooth epidemiological models of switched vaccination, implemented once the number of people exposed to a virus reaches a critical level, were studied in Wang et al. (2014). 
In yet another class of epidemiological models with adaptive switching behavior,  a stochastic switching model was combined with an economic optimal stopping problem to determine the optimal timings for public health interventions (Sims et al. 2016). Models studied in Chladn\'a et al. (2020) assume that intervention measures are implemented when the number of infected individuals exceeds a critical level, and the intervention stops when this number drops below a different (lower) threshold. Implications of such two-threshold intervention strategies for dynamics of an SIR model were considered both in the case of switched vaccination rate (see Section \ref{2.1}) and switched transmission rate.

Multiple factors can influence the willingness of an individual to receive immunization depending on the perceived risk of contracting the disease, risk of possible complications, personal beliefs, etc. These risks vary with age, health conditions, lifestyle and profession. Further, interventions of the health authorities and administrative measures at the level of a county or state can vary in scale depending on the availability of resources, the local economic situation and other factors. All these variations lead to the heterogeneity of the response of the population to the advent of an epidemic.

In this paper, we propose a variant of the SIR model with a heterogeneous response and analyze how the heterogeneity affects the dynamics. We focus on the scenario when interventions of the health authorities affect the vaccination rate (assuming vaccines are available) while the transmission rate remains constant; the case of variable transmission rate will be considered elsewhere.
As a starting point, we adopt the approach of Chladn\'a et al. (2020)  to modeling the homogeneous switched response of a subpopulation to the varying number of infected individuals by a two-threshold two-state relay operator (as described in Section \ref{2.1}). To reflect the heterogeneity, multiple subpopulations are considered, each characterized by a different pair of switching thresholds. In order to keep the model relatively simple, we apply averaging under further simplifying assumptions. The main simplification is that perfect mixing of the population is assumed. This leads to a differential model with just two variables, $S$ and $I$, but with a complex operator relationship between the vaccination rate $v$ and the density of the infected individuals $I$.  As such, this operator relationship, known as the Preisach operator, accounts for the heterogeneity of the response.
Heterogeneity of intervention policies can be modeled in a similar fashion
(see Section \ref{2.2}).

We show that the system with the Preisach operator is amenable to analysis 
when interpreted as a switched system (Bernardo et al. 2008) associated with a one-parameter family of nonlinear planar vector fields
$\Phi_u=\Phi_u(I,S)$ (where $u\in \mathbb{R}$ is a parameter).
Between the switching moments, a trajectory of the system is an integral curve of a particular vector field. The Preisach operator imposes non-trivial rules for switching from one vector field  to another. 
Some intuition can be drawn from dynamics of systems with dry friction 
such as models of presliding friction behavior (Al-Bender et al. 2004; Ruderman 2011), and population models with theresholds (Meza et al. 2009).

Using the switched systems approach, we show that if $R_0\le 1$, then the infection-free equilibrium is globally stable. In the case of $R_0>1$, the bi-stable nature of an individual response leads to multi-stability in the aggregated model. Namely, the system has a connected set of endemic equilibrium states characterized by different proportion of infected and immune individuals\footnote{Using an analogy with mechanical systems that exhibit dry friction this is not surprising: for example, an object can achieve an equilibrium on a curved surface at any point where the slope does not exceed the dry friction coefficient because friction balances the gravity. On the other hand, this is not unlike the classical SIR model \eqref{SIR} with zero mortality and vaccination rates rates, $\mu=v=0$, where the infection-free equilibrium states form the segment $S+R=N$, $0\le S\le N$.}. In this case, we show that every trajectory converges either to one of the endemic equilibrium states or to a periodic orbit corresponding to the recurrence of the disease. Under additional natural assumptions,
we prove the global stability of the set of endemic equilibrium states
by adapting the method of Lyapunov functions.  Each vector field $\Phi_u$ has a global Lyapunov function $V_u=V_u(I,S)$. We 
establish the global stability of the switched system by
 controlling the increment $V_u(I_1,S_1)-V_u(I_2,S_2)$ of the Lyapunov function along a trajectory between the switching points, and the difference $V_{u_1}(I,S)-V_{u_2}(I,S)$ of the Lyapunov functions at those points. 
  Numerical analysis of SIR models with the Preisach operator was previously performed in (Pimenov et al. 2010, Pimenov et al. 2012) 

The paper is organized as follows. In the next section we present the model
and remind the definition of the Preisach operator.
In Section 3 
some preliminary properties of the Preisach operator and the model are discussed, including
hysteresis loops, equilibrium states and the global stability of the infection-free equilibrium in the case $R_0\le 1$. Sections 4 and 5 present the main results on dynamics in the case $R_0>1$ and their proofs.


\section{Model}
	
We consider the following SIR model 		
\begin{equation}\label{2a}
\begin{array}{l}
\dot I=\beta IS -(\gamma+\mu)I,\\
\dot S= -\beta IS -v(t)S-\mu S+\mu,\\
\dot R=\gamma I +v(t) S -\mu R
\end{array}
\end{equation}
with an additional feedback loop, which relates the variable vaccination rate $v=v(t)$ to the concurrent and past values of the density $I=I(t)$ of the infected population. In the main part of the paper, it is assumed that the function $I: \mathbb{R}_+\to \mathbb{R}$ is mapped to the function $v: \mathbb{R}_+\to \mathbb{R}$ by the so-called continuous Preisach operator, which is described in the following sections.
In order to motivate and explain the nature of the assumed operator
 relationship between $I$ and $v$, we first briefly discuss the non-ideal relay operator 
 in the same context. {Regardless} of the specific form of the feedback,
 	one can see that the sum $I+S+R$ is conserved by system \eqref{2a}, and the last equation is redundant. 
 	Without loss of generality, we can interpret $S, I, R$ as relative densities
 	assuming that $I+S+R=1$
 	at all times.
 We denote $\delta = \mu + \gamma$ and rewrite the system as
 \begin{equation}\label{2}
 \begin{array}{l}
 \dot I=\beta IS - \delta I,\\
 \dot S= -\beta IS -v(t)S-\mu S+\mu.
 \end{array}
 \end{equation}

Note that the domain
 \begin{equation}\label{D}
 \mathfrak{D}=\{ (I,S): I>0, S>0, S+I \le 1\}
 \end{equation} 
 is flow-invariant for this system. 
 Indeed, in this region, $\dot I\le (\beta-\delta)I$ and 
 $$
 \begin{array}{c}\dot I=0 \quad \text{for} \quad I=0,\\
 \dot S=\mu>0 \quad \text{for} \quad S=0,\\
 \dot I + \dot S =  - \delta I - v(t) S + \mu (1-S) \leq -\delta I + \mu - \mu S < 
 0 \quad \text{for} \quad {I+S=1}
 \end{array}
 $$
 (where we use $\delta>\mu$),
 which implies  the statement. 
 We will consider trajectories from the domain $\mathfrak{D}$ only.
 
  \subsection{Switched model with one non-ideal relay {operator}}\label{2.1}
 In Chladn\'a et al. 2020, the relationship between the density of the infected population, $I=I(t)$,
 and the vaccination rate 
 was modeled 
 by
 the simplest hysteretic 
 operator called the {\em non-ideal relay}, which is
 also known as {a rectangular hysteresis loop} or a lazy switch (Visintin 1994).
 The relay operator is characterized by two scalar parameters $\alpha_1$ and $\alpha_2$, the threshold values, with $\alpha_1<\alpha_2$. We will use the notation $\alpha=(\alpha_1,\alpha_2)$. The input of the relay is an arbitrary continuous function of time, $I: \mathbb{R}_+\to\mathbb{R}$. The {\em state} $\nu_{\alpha}(\cdot)$ equals either $0$ or $1$ at any moment $t\in \mathbb{R}_+$. If the input value at some instant is below the lower threshold {value} $\alpha_1$, then the state at this instant is $0$ and it remains equal to $0$ as long as the input is below the upper threshold value $\alpha_2$. When the input reaches the value $\alpha_2$, the state switches instantaneously to the value $1$. Then, the state remains equal to $1$ as long as the input stays above the lower threshold value $\alpha_1$. When the input reaches the value $\alpha_1$, the state switches back to $0$. This dynamics is captured by the input-state diagram shown in Figure~\ref{fig1}. 
 In particular, the input-state pair
 $(I(t),\nu_{\alpha}(t))$ belongs to the union of the two horizontal rays shown in bold in Figure
 \ref{fig1} at all times. 
 
 The above description results in the following definition.
 Given any continuous input $I: \mathbb{R}_+\to \mathbb{R}$ and an initial value of the state, $\nu_{\alpha}(0)=\nu_{\alpha}^0$,
 satisfying the constraints
 \begin{equation}\label{v1}
 \nu_{\alpha}^0\in\{0,1\} \quad \text{if} \quad \alpha_1<I(0)< \alpha_2;
 \end{equation}
 \begin{equation}\label{v2} 
 \nu_{\alpha}^0=0 \quad \text{if} \quad I(0)\le \alpha_1;\qquad \nu_{\alpha}^0=1 \quad \text{if} \quad I(0)\ge\alpha_2,
 \end{equation}
 the state of the relay at the future moments $t>0$ is defined by
 \begin{equation}\label{relay'}
 \nu_{\alpha}(t)=
 \left\{
 \begin{array}{cl}
 0 & \text{if there is $t_1\in [0,t]$ such that $I(t_1)\le \alpha_1$}\\
 & \text{and $I(\tau)<\alpha_2$ for all $\tau\in (t_1, t]$;}\\
 1 & \text{if there is $t_1\in [0,t]$ such that $I(t_1)\ge \alpha_2$}\\
 & \text{and $I(\tau)>\alpha_1$ for all $\tau\in (t_1, t]$};\\
 \nu_{\alpha}(0) & \text{if $\alpha_1<I(\tau)<\alpha_2$ for all $\tau\in[0,t]$.}
 \end{array}\right.
 \end{equation}
 This time series  of the state, which depends both on the input $I(t)$ $(t\geq 0)$ and the initial state  $\nu_{\alpha}(0)=\nu_{\alpha}^0$  of the relay, will be denoted by
 \begin{equation}\label{re}
 \nu_{\alpha}(t)=({\mathcal R}_{\alpha}[\nu_{\alpha}^0]I)(t),\qquad t\geq 0.
 \end{equation}
 By definition, the state satisfies the constraints
 \begin{equation}\label{compatibility}
 \nu_{\alpha}(t)=1 \quad \text{whenever} \quad I(t)\geq
 \alpha_2; \quad \nu_{\alpha}(t)=0 \quad \text{whenever} \quad I(t)\leq \alpha_1
 \end{equation}
 at all times. Further,
 the function \eqref{relay'} has at most a finite number of jumps between the values $0$ and $1$
 on any finite time interval $t_{0}\leq t\leq t_{1}$. 
 If the input oscillates between two values $I_1,I_2$, such that $I_1<\alpha_1<\alpha_2<I_2$, then the point $(I(t),\nu_{\alpha}(t))$ moves counterclockwise along the {rectangular hysteresis loop} shown in Figure~\ref{fig1}. 
 
 \begin{figure}[h]
 	\begin{center}
 		\includegraphics*[width=0.7\columnwidth]{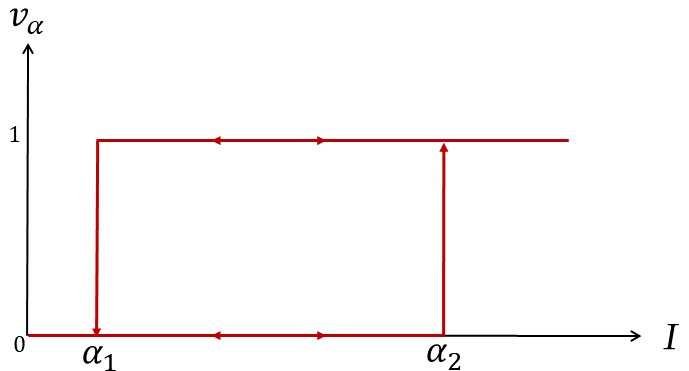}
 	\end{center}
 	\caption{The non-ideal relay operator defined by \eqref{relay'} 
 		maps the pair $(I(t),\nu_{\alpha}(0))$, where $I(t)$ ($t\ge 0$) is the input and $\nu_{\alpha}(0)$ is the initial state,  to the time series of the state  $\nu_{\alpha}(t)$ for $t>0$. The input-state pair $(I,\nu_{\alpha})$ belongs to the union of the two bold (open) horizontal rays at all times. Initially, it belongs to the upper ray if $\nu_{\alpha}(0)=1$ and to the lower ray if $\nu_{\alpha}(0)=0$. 
 		The point $(I,\nu_{\alpha})$ moves horizontally left whenever $\dot I<0$ and right whenever $\dot I>0$.
 		Further, when $(I,\nu_{\alpha})$ reaches the end of either ray, it  transits vertically to the other ray. This transition is instantaneous.
 	}
 	\label{fig1}
 \end{figure}

In Chladn\'a et al. (2020), it was assumed that interventions of the health authority change the vaccination rate according to the following rules.
The vaccination rate is switched from a lower rate $v_{nat}$ to a higher rate ${v_{int}:=v_{nat}+ q_0}$, ${q_0 > 0}$, when the density of the infected population reaches a threshold value $\alpha_2$. The intervention stops when the number of infected individuals drops below a lower threshold value $\alpha_1$, at which point the vaccination rate returns to its lower value $v_{nat}$.
Using the definition \eqref{relay'} of the non-ideal relay operator \eqref{re}, this leads to the formula
\begin{equation}\label{rev}
v(t)= v_{nat} 
+ q_0\cdot
	({\mathcal R}_{\alpha}[\nu_{\alpha}^0]I)(t).
\end{equation}
Coupling of this operator equation with dynamic equations \eqref{2} results in a switched system.

As  shown in Chladn\'a et al. (2020), {switched} system \eqref{2}, \eqref{rev} exhibits different dynamic scenarios depending on its parameters.
In particular, it can have a globally stable endemic equilibrium. Alternatively, a locally stable endemic equilibrium coexists with a stable periodic orbit. Along this orbit, the vaccination rate \eqref{rev} switches between the values $v_{nat}$ and $v_{nat}+ q_0$ twice per period. 

\subsection{Model with {heterogeneous} vaccination rate}\label{2.2}
%

Now, we consider a model, in which several vaccination laws of the form \eqref{rev}, with different thresholds $\alpha$, are combined either because the health authority employs multiple intervention strategies or because different individuals respond differently to the advent of an epidemic.

Assume that the health authority 
has multiple intervention policies (numbered $n=1,\ldots,N$) 
in place, each increasing the vaccination rate by a certain amount $ q_n$ while the intervention is implemented,
in order to provide a response, which is adequate to the severity of the epidemic. 
Further, assume that each  intervention policy 
is guided by the two-threshold start/stop rule, such as in \eqref{rev}, associated with a particular pair of thresholds $\alpha^n=(\alpha_1^n,\alpha_2^n)$. Under these assumptions,  the vaccination rate in system \eqref{2} is given by
\begin{equation}\label{rev'}
v(t)= v_{nat} 
+ \sum_{n=0}^{N-1} q_n\cdot
({\mathcal R}_{\alpha^n}[\nu_{\alpha^n}^0]I)(t).
\end{equation}
This formula defines a mapping from the space of continuous
inputs $I:\mathbb{R}_+\to \mathbb{R}$ to the space of piecewise constant outputs $v:\mathbb{R}_+\to \mathbb{R}$, which is known as the discrete Preisach operator ( Krasnosel'skii et al. 1983), see Figure \ref{fig2}. In particular, the vaccination rate \eqref{rev'} is set to change at multiple thresholds $\alpha_1^n, \alpha_2^n$.

\begin{figure}[ht]
	\begin{center}
		\includegraphics*[width=0.9\columnwidth]{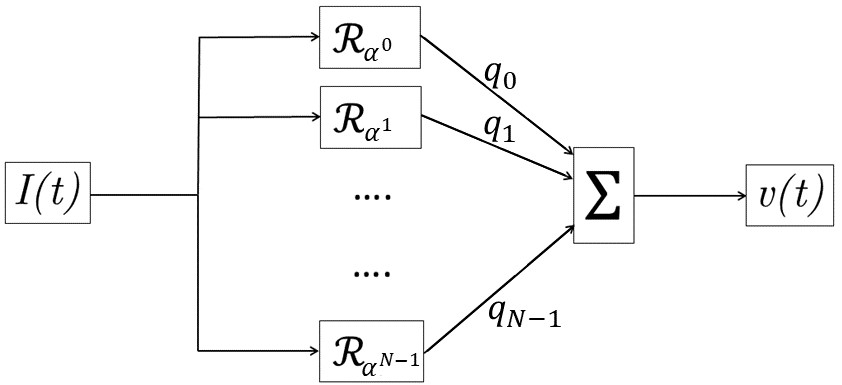}
	\end{center}
	\caption{ Preisach model as the parallel connection of non-ideal
		relays with weights. Relays $R_{\alpha}$ with different pairs of thresholds respond to a common input $I(t)$. These
		relays function independently of each other and contribute to the
		output $v(t)$ of the model, which is defined as the weighted
		sum (integral) of the outputs of the individual relays
		$R_{\alpha}$.
	}
	\label{fig2}
\end{figure}

On the other hand, individuals can respond differently to 
dynamics of the epidemic 
and interventions of the health authority.
In particular, the willingness to receive vaccination can vary significantly from individual to individual for the same level of threat of contracting the disease.
In order to account for the heterogeneity of the individual response, let us divide the susceptible population into non-intersecting subpopulations parametrized by points $\alpha$ of a subset $\Pi\subset \{\alpha=(\alpha_1,\alpha_2): \alpha_1<\alpha_2\}$ of the $\alpha$-plane. Assuming that the vaccination rate for a subpopulation labeled $\alpha$  is given by \eqref{rev} with $q=q(\alpha)$, the total vaccination rate equals
\begin{equation}\label{rev''}
v(t)= v_{nat} 
+ \iint_{\Pi} q(\alpha)\,
({\mathcal R}_{\alpha}[\nu_{\alpha}^0]I)(t)\, d F(\alpha),
\end{equation}
where the probability measure $F$ describes the distribution of the susceptible {population} over the index set $\Pi$ (the set of threshold pairs). As a simplification, let us assume that this measure is independent of time (in particular, the distribution does not change with variations of $I$). Then, 
the mapping of the space of continuous
inputs $I:\mathbb{R}_+\to \mathbb{R}$ to the space of outputs $v:\mathbb{R}_+\to \mathbb{R}$
defined by \eqref{rev''} is known as the general Preisach operator, which includes the discrete Preisach operator \eqref{rev'} and a continuous Preisach model (corresponding to an absolutely continuous measure $F$) as particular cases.
In either case, 
the Preisach operator is referred to as a superposition (or parallel connection) of weighted non-ideal relays.


\subsection{Continuous Preisach model}
Let us consider a rigorous definition of the Preisach operator \eqref{rev''}
with an absolutely continuous measure $F$ (Krasnosel'skii et al., 1989). 
It involves a collection of non-ideal relays ${\mathcal R}_{\alpha}$, which respond to the same  continuous input $I=I(t)$ independently. The relays contributing to the system have different pairs of thresholds $\alpha=(\alpha_1,\alpha_2)\in \Pi$,
where we
assume that $\Pi$ is measurable and bounded;
the $\alpha$-plane is called the Preisach plane.
The output 
of the continuous Preisach model is the scalar-valued function 
\begin{equation}\label{pre}
v(t)=v_{nat}+\iint_{\Pi} q(\alpha)\, \bigl({\mathcal R}_{\alpha}[\nu_{\alpha}^0]I\bigr)(t) \,d\alpha_1 d\alpha_2,\qquad t\ge 0,
\end{equation}
where 
$q:\Pi\to\mathbb{R}$  is a positive bounded measurable function (measure density) representing the weights of the relays; and, $\nu_{\alpha}^0$
is the initial state of the relay ${\mathcal R}_{\alpha}$ for any given $\alpha\in\Pi$.  
The function $\nu^0=\nu_{\alpha}^0:\Pi\to \{0,1\}$ of the variable $\alpha=(\alpha_1,\alpha_2)$
is referred to as the {\em initial state} of the Preisach operator. It
is assumed to be measurable and satisfy the constraints \eqref{v1}, \eqref{v2}, in which case the initial state-input pair is called {\em compatible}.
These requirements ensure that the integral in \eqref{pre} is well-defined for each $t\ge 0$ and, furthermore, the output $v(\cdot)$ of the Preisach model is continuous.
The input-output operator of the Preisach model defined by \eqref{pre} will be denoted by
\begin{equation}\label{P}
v(t)=({\mathcal P}[\nu^0]I)(t),\qquad t\geq 0,
\end{equation}
where both the input $I:\mathbb{R}_+\to \mathbb{R}$ and the initial state $\nu^0=\nu^0_{\alpha}$ (which is compatible with the input)
are the arguments; the value of this operator is the output
$v:\mathbb{R}_+\to \mathbb{R}$. 


In what follows we consider system \eqref{2} with the vaccination rate defined by equation \eqref{pre}.

\section{Preliminaries}

We begin by discussing 
some of 
the properties of the Preisach operator and  system \eqref{2}, \eqref{pre}.

\subsection{Global Lipschitz continuity.} 
The Preisach operator \eqref{P} is globally Lipschitz continuous (Krasnosel'skii et al. 1983). More precisely, the relations 
\[
v_k(t)=({\mathcal P}[\nu^0_k]I_k)(t),\qquad t\geq 0, {\qquad k=1,2,}
\]
imply
\begin{equation}\label{LipP*}
\|v_1-v_2\|_{C([0,\tau];\mathbb{R})} \le  K \Big( \|\nu^0_1-\nu^0_2\|_{L_1(\Pi;\mathbb{R})} + \|I_1-I_2\|_{C([0,\tau];\mathbb{R})}\Big)
\end{equation}
for any $\tau\geq 0$ with
\begin{equation}\label{K}
K := \max_{0\leq \alpha_1\leq 1} \int_{\alpha_1}^1 q(\alpha_1,\alpha_2) \dd \alpha_2.
\end{equation}

	Let us denote by $\mathfrak U$ the set of all triplets $(I_0,S_0,\nu^0)$, where $(I_0,S_0)\in \mathfrak D$ (see \eqref{D})
	and the initial state $\nu^0$ of the Preisach operator is compatible with $I_0$. 
	The global Lipschitz estimate \eqref{LipP*} ensures (for example, using the Picard-Lindel\"of type of argument) that for 
	given $(I_0,S_0,\nu^0)\in \mathfrak U$, 
	system \eqref{2} with the Preisach operator \eqref{P} has a unique local solution
	with the initial data $I(0)=I_0, S(0)=S_0$ and the initial state $\nu^0$ (see, for example, the survey in Leonov et al. 2017). Further, the invariance of $\mathfrak D$ implies that each solution is extendable to the whole semi-axis $t\geq 0$. These solutions induce a {semi-flow} in the set $\mathfrak U$, which is considered to be the phase space 
	of system \eqref{2} and is endowed with a metric by the natural embedding into the space $\mathbb{R}^2\times L_1(\Pi;\mathbb{R})$. This leads to the standard definition of local and global stability including stability of equilibrium states and periodic solutions. In particular, an equilibrium is a triplet $(I_0,S_0,\nu^0)\in \mathfrak{U}$ and a periodic solution is a periodic function $(I(\cdot),S(\cdot),\nu(\cdot)): \mathbb{R}_+\to \mathfrak U$ where the last component viewed as a function $\nu:\mathbb{R}_+\times \Pi\to \{0,1\}$ of two variables $t\in\mathbb{R}_+$ and $\alpha\in\Pi$ is given by \eqref{re}.

The vaccination rate \eqref{pre} at an equilibrium is constant, while for a periodic solution the vaccination rate is also periodic with the period of $I$ and $S$.

\subsection{Hysteresis loops.}
We call a periodic input $I=I(t)$ {\em simple} if each local minimum of this input is its global minimum and each local maximum is its global maximum.
That is, the input increases from its global minimum value to its global maximum value and { then} decreases back to its global minimum value over one period.
Let us consider inputs $I:\mathbb R_+\to [0,1]$.

The following property of the Preisach operator is called {\em monocyclicity}:
for any $T$-periodic input $I(t)$, $t\ge 0$, and any admissible initial state of the Preisach operator, the corresponding output $v(t)=(\mathcal P[\nu^0]I)(t)$ satisfies $v(t+T)=v(t)$ for all $t\ge T$. Further, for a simple periodic input, the output is defined by
\begin{equation}\label{loop}
v(t)=\left\{
\begin{array}{lll}
\bar v (I(t)) & \text{as $I(t)$ increases},\\
\hat v (I(t)) & \text{as $I(t)$ decreases}
\end{array}
\right.
\end{equation}
for $t\ge T$, where the Lipschitz continuous functions $\bar v(\cdot), \hat v(\cdot)$ increase and satisfy
\begin{equation}\label{vv}
\bar v(I) < \hat v(I) \ \ \text{for} \ \ I_1<I<I_2;
\end{equation} 
\begin{equation}\label{vv''}
\bar v(I_1) = \hat v(I_1); \qquad \bar v(I_2) = \hat v(I_2)
\end{equation}
with
\begin{equation}\label{ii}
I_1:=\min_{t\ge 0} I(t), \qquad  I_2:=\max_{t\ge 0} I(t).
\end{equation}
In other words, a simple periodic input produces a closed (hysteresis) 
loop formed by the graphs of the functions $\bar v(\cdot), \hat v(\cdot)$ on the input-output diagram (after the moment $T$), see Figure \ref{fig3}(a). 

\begin{figure}[ht]
	\begin{center}
		\includegraphics[width=\linewidth]{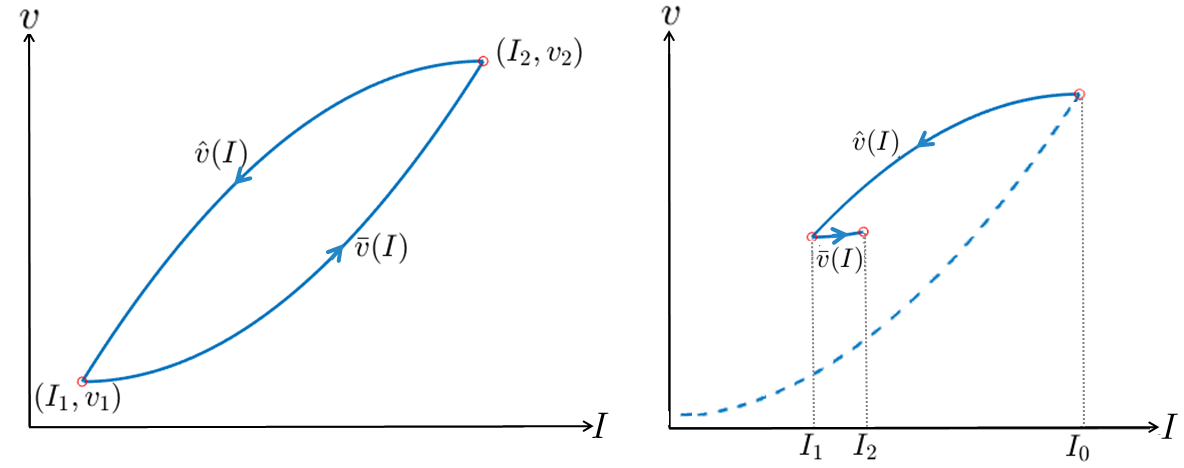}
	\small (a) \hfil (b)
	\end{center}
	\caption{Hysteresis loops on the $(I,v)$ diagram.}
	\label{fig3}
\end{figure}

Importantly, these functions depend on the initial state of the Preisach operator. However, 
by the definition of this operator,
\begin{equation}\label{width}
\hp(I) - \bar v(I) = \int_I^{I_2}\dd \alpha_2 \int_{I_1}^{I} q(\alpha_1, \alpha_2) \dd \alpha_1.
\end{equation}
Therefore, for a simple periodic input, the difference $\Delta v= \hat v-\bar v$ is a nonnegative function of three scalar variables, $\Delta v=\Delta v(I,I_1,I_2)$, defined on the domain $0\le I_1\le I\le I_2\le 1$.
Since $q=q(\alpha_1,\alpha_2)$ is bounded, the following quantity is well-defined and finite:
\begin{equation}\label{L}
L:=\max_{0\leq I_1  < I_2\leq 1}  \frac{1}{I_2 - I_1}  \left( \max_{I_1 \leq I\leq I_2}{\bigl(\hp(I) - \bar v(I)\bigr)}\right).
\end{equation}
In particular,
$L\leq K$ (cf.~\eqref{K}).
The quantity  \eqref{L} measures the maximal width of the hysteresis loops of the Preisach operator relative to their length. It plays an important role in the following. We will assume that 
\begin{equation}\label{xxx}
L < \beta.
\end{equation}

More generally, let $0\le t_0<t'<t''$ and let us consider an input $I=I(t)$, which increases on the interval $[t_0,t']$ and then decreases on the interval $[t',t'']$. Such inputs will be also called {\em simple} on the interval $[t_0,t'']$. Suppose that 
the input values $I_0=I(t_0)$, $I_2=I(t')$ and $I_1=I(t'')$ satisfy $I_0\leq I_1<I_2$. Then it follows from the definition of the Preisach operator that formula \eqref{loop} is valid, where the increasing functions $\bar v(\cdot), \hat v(\cdot)$ satisfy \eqref{vv}, but  relations \eqref{vv''} and \eqref{width} do not necessarily hold.
Formulas \eqref{loop} and \eqref{vv} are also true if the input $I=I(t)$ first decreases on the interval $[t_0,t']$ and then increases on the interval $[t',t'']$, and the input values
$I_0=I(t_0)$, $I_1=I(t')$ and $I_2=I(t'')$ satisfy $I_1<I_2\leq I_0$, see
see Figure \ref{fig3}(b).
Functions $\bar v=\bar v(I)$, $\hat v=\hat v(I)$ in \eqref{loop}
will be referred to as an {\em ascending branch} and a {\em descending branch}
of the Preisach operator. It is important to notice that these functions depend on the states $\nu_\alpha(t_0)$, $\alpha\in \Pi$, of the relays at the moment $t_0> 0$, which in turn depend on the initial states $\nu_\alpha^0$ of the relays at the moment $t=0$ and the value of the input $I$ on the interval $[0,t_0]$ (cf.~\eqref{re}). As such, on any interval of monotonicity of the input, the input-output pair $(I,v)$ follows one of infinitely many possible branches of the Preisach operator, and a particular branch followed by the input-output pair is uniquely defined by the prior history of the input variations and the initial states of the relays. 


\subsection{Equilibrium points.}
We will assume that
\begin{equation}\label{Pi}
\Pi = \{\alpha=(\alpha_1,\alpha_2): 0\le \alpha_1<\alpha_2\le 1 \}.
\end{equation}
Due to this assumption and the compatibility constraint \eqref{compatibility}, the inclusion $\alpha\in\Pi$ implies that all the relays are in state $\nu_\alpha=0$ when $I=0$. Therefore,
system \eqref{2}, \eqref{pre} has a 
unique {\em infection-free} equilibrium
\begin{equation}\label{infectionfree}
(I_*,S_*,\nu^0)=\left(0,\frac{\mu}{\mu+v_{nat}},0\right)\in \mathfrak{U},
\end{equation} 
in which the state $\nu^0=\nu_\alpha^0: \Pi\to\{0,1\}$ of the Preisach operator is the identical zero. According to \eqref{pre}, the vaccination rate at this equilibrium is minimal and equals $v_{nat}$.

In addition, if 
\begin{equation}\label{ineq}
R_0:=\frac{\beta\mu}{\delta(\mu+v_{nat})}>1,
\end{equation}
then system \eqref{2}, \eqref{pre} also has a family of {\em endemic} equilibrium states
\begin{equation}\label{endemic}
(I^*,S^*,\nu^0)=\left(\frac{\mu}{\delta}-\frac{\mu+v^0}{\beta},\frac{\delta}{\beta},\nu^0\right)\in \mathfrak{U}
\end{equation}
with $(I^*,S^*)\in \mathfrak D$, where
the vaccination rate $v^0$ is related to the state $\nu^0=\nu_\alpha^0: \Pi\to\{0,1\}$ of the Preisach operator by
\begin{equation}\label{vnu}
v^0=v_{nat}+\iint_{\Pi} \nu_{\alpha}^0 \,q(\alpha) \,d\alpha_1 d\alpha_2.
\end{equation}
These equilibria form a connected set in $\mathfrak U$. 
Further, the set \eqref{endemic}
includes equilibrium states with different vaccination rates $v^0$ and different proportions of the infected and recovered populations,

while the fraction of the susceptible individuals is the same at 
all these states.

\begin{remark}
	\rm If $\Pi$ is different from \eqref{Pi} and includes {points} $(\alpha_1,\alpha_2)$ with $\alpha_1<0, \ 0\le \alpha_2\le 1$, then the set of infection-free equilibrium states is also infinite and connected. Depending on the parameters, it can be either an attractor or repeller or include equilibrium states with different stability properties (this is not unlike the classical SIR model with zero mortality rate). Further, in this case, a solution which starts from small infected population and converges to an infection-free equilibrium is characterized by higher vaccination rate at the end than at the beginning. This is because the relays with $\alpha_1<0$ switch from state $0$ to state $1$ but never switch back due to the positivity of the input $I$.
\end{remark}

Trajectories of system \eqref{2}, \eqref{pre} lie in the infinite-dimensional phase space $\mathfrak U$ of triplets $(I,S,\nu^0)$.
Slightly abusing the notation, we will sometimes also refer to the two-dimensional curve  $(I(t),S(t))$ as to a trajectory, omitting the component \eqref{re} in the state space of the Preisach operator. 

	\begin{proposition}\label{p1}
	If 
	\begin{equation}\label{3}
	R_0=\frac{\beta\mu}{\delta(\mu+v_{nat})}\le 1,
	\end{equation}
	then the  infection-free equilibrium \eqref{infectionfree} is the global attractor of system \eqref{2}, \eqref{pre}. If the opposite inequality \eqref{ineq} holds,
	then any trajectory of system \eqref{2}, \eqref{pre}, which has at most finite number of intersections with the nullcline $\dot I=0$ (the line $S=S^*=\delta/\beta$), 
	converges to an endemic equilibrium.
\end{proposition}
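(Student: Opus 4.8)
The plan is to treat the two assertions separately, and in each case to exploit the fact that between switching events the trajectory $(I(t),S(t))$ is an integral curve of an ordinary planar vector field (the system \eqref{2} with $v(t)$ frozen at its current value), while the Preisach operator only updates $v(t)$ through the monotone branches described in \eqref{loop}--\eqref{width}. The key structural observation, which I would establish first, is that $v(t)=(\mathcal P[\nu^0]I)(t)$ is always confined to the interval $[v_{nat}, v_{nat}+\iint_\Pi q\,d\alpha_1d\alpha_2]$ and is nondecreasing whenever $I$ is increasing and nonincreasing whenever $I$ is decreasing (monotonicity of the branches). This lets me bound $v(t)$ above and below by constants and compare the true dynamics with the autonomous systems obtained by freezing $v$.

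\emph{Case $R_0\le 1$.} Here I would aim for a genuine Lyapunov argument. Since $R_0\le 1$ means $\beta\mu\le \delta(\mu+v_{nat})$ and $v(t)\ge v_{nat}$ always, I would show that $\dot I=\beta IS-\delta I\le I(\beta S-\delta)$ with $S$ bounded above along the flow. Using $\dot S = -\beta IS - v(t)S-\mu S+\mu$ and $v(t)\ge v_{nat}$, one checks that $S^*=\delta/\beta$ fails to be attainable as an equilibrium abscissa because the infection-free value $\mu/(\mu+v_{nat})\le \delta/\beta$ under \eqref{3}; this forces $\beta S-\delta<0$ eventually, so $I(t)\to 0$. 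Once $I\to 0$ the Preisach operator has all relays in state $0$ (by \eqref{Pi} and \eqref{compatibility}), so $v\to v_{nat}$ and the $S$-equation reduces to the scalar linear equation $\dot S=-(\mu+v_{nat})S+\mu$, whose unique attracting equilibrium is $\mu/(\mu+v_{nat})$. Assembling these gives convergence of the full triplet to \eqref{infectionfree}. I expect the main technical care here to be justifying that $I\to 0$ forces the \emph{state} $\nu^0$ to converge to the identical zero in $L_1(\Pi)$, not merely $v\to v_{nat}$; this follows because each relay with $\alpha_1>0$ switches off once $I$ drops below $\alpha_1$, and $\Pi$ has finite measure, so dominated convergence applies.

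\emph{Case $R_0>1$ with finitely many crossings of $S=S^*$.} The hypothesis is the crucial simplification: after the last crossing, $S(t)-S^*$ keeps a constant sign, hence $\dot I=\beta I(S-S^*)$ keeps a constant sign, so $I$ is eventually monotone. An eventually monotone $I$ that stays in the bounded interval $[0,1]$ converges to a limit $I_\infty$, and by monotonicity of the Preisach branch the output $v(t)$ also converges to some $v_\infty$. I would then pass to the limit in \eqref{2}: $\dot I\to 0$ forces $S_\infty=\delta/\beta=S^*$ (if $I_\infty>0$), and the $S$-equation at equilibrium gives $I_\infty=\mu/\delta-(\mu+v_\infty)/\beta$, which is exactly the endemic formula \eqref{endemic}. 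To conclude that the \emph{triplet} converges in $\mathfrak U$, I would note that on an interval of eventual monotonicity the state $\nu_\alpha(t)$ of each relay converges pointwise (the input is eventually monotone, so each relay switches at most once more), and again invoke dominated convergence over the finite-measure set $\Pi$ to upgrade to $L_1$-convergence of $\nu$.

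The main obstacle I anticipate is excluding the degenerate possibility $I_\infty=0$ in the second case while remaining consistent with $R_0>1$: I must rule out that a trajectory with finitely many crossings nonetheless limits onto the infection-free point rather than an endemic one. I would handle this by a linearization/instability argument showing that under \eqref{ineq} the infection-free equilibrium is a repeller in the $I$-direction (since $\beta S_*-\delta>0$ there), so trajectories with $I(0)>0$ cannot accumulate at $I=0$ unless they oscillate infinitely often across $S=S^*$ — which is precisely excluded by hypothesis. A secondary subtlety is that the limiting branch value $v_\infty$ depends on the full switching history, so the endemic equilibrium reached is not unique but lies in the connected family \eqref{endemic}; I would state the conclusion accordingly, identifying $v_\infty$ through \eqref{vnu} with the terminal state $\nu_\alpha(\infty)$.
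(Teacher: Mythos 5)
Your proposal rests on the same structural insight as the paper's proof: once $I$ is (eventually) monotone, the Preisach output follows a single increasing branch, so the trajectory can be compared with, or identified with, a trajectory of an autonomous planar system. The execution of the endemic case, however, is genuinely different. The paper identifies the tail of the trajectory with a trajectory of system \eqref{2'} for a fixed branch $\tilde v$, notes that \eqref{ineq} gives \eqref{2'} a unique endemic equilibrium \eqref{j1}, and concludes via the Korobeinikov-type Lyapunov function \eqref{6}, whose derivative $-\mu(S-S^*)^2/(S^*S)$ is negative off the nullcline. You instead use monotone boundedness of $I$, convergence of $v$ along the branch, and a Barbalat-type passage to the limit in \eqref{2} to identify the limit point with \eqref{endemic}. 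Your route is more elementary (no Lyapunov function needed here) and has the merit of explicitly upgrading convergence to the full triplet in $\mathfrak U$ via dominated convergence of the relay states, a point the paper leaves implicit; the paper's route buys the Lyapunov machinery that it reuses in Theorems \ref{t1} and \ref{t2}.

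Two steps need repair. First, in the case \eqref{3}, the claim ``this forces $\beta S - \delta < 0$ eventually, so $I(t)\to 0$'' is a non sequitur as written: eventual entry into $\{S<S^*\}$ only makes $I$ strictly decreasing, and a priori $S$ could creep up toward $S^*$ while $I$ stalls at a positive limit. The fix is short: if $I(t)\ge c>0$ for all large $t$, then $\dot S \le \mu - (\mu+v_{nat}+\beta c)S$, so $\limsup_{t\to\infty} S \le \mu/(\mu+v_{nat}+\beta c) < \mu/(\mu+v_{nat}) \le S^*$; hence eventually $S\le S^*-\eta$ for some $\eta>0$, whence $\dot I \le -\beta\eta I$ and $I\to 0$ exponentially, a contradiction. (Alternatively, argue as the paper does: monotone $I$ means the trajectory solves \eqref{2'} for a descending branch, and under \eqref{3} that autonomous system has no endemic equilibrium.)

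Second, in the case \eqref{ineq}, excluding $I_\infty=0$ by ``linearization at the infection-free equilibrium'' is the wrong tool. In the dangerous scenario ($S<S^*$ after the last crossing, $I$ decreasing to $0$) the trajectory accumulates on the segment $\{I=0,\ S\le S^*\}$, which is bounded away from the infection-free point $(0,S_*)$ because $S_*=\mu/(\mu+v_{nat})>S^*$ under \eqref{ineq}; the sign of $\beta S_*-\delta$ at that equilibrium therefore controls nothing along such a tail. The correct mechanism, which your parenthetical ``unless they oscillate infinitely often across $S=S^*$'' implicitly invokes, is the one your own setup supplies: as $I\downarrow 0$, assumption \eqref{Pi} and the compatibility constraint \eqref{compatibility} force $v(t)\to v_{nat}$, so eventually $\dot S \ge \mu-(\mu+v_{nat}+\varepsilon)S$, giving $\liminf_{t\to\infty} S \ge \mu/(\mu+v_{nat}+\varepsilon) > S^*$ for small $\varepsilon$ by \eqref{ineq}. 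Thus $S$ must cross $S^*$ again, contradicting the finiteness hypothesis. With these two repairs, and Barbalat's lemma cited for the limit identification, your argument is complete.
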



	\begin{proof}
		
		If $\delta/\beta\ge 1$, then the first equation of \eqref{2} implies that $\dot I<0$ in $\mathfrak D$.
		Therefore, on any given trajectory of \eqref{2}, \eqref{pre}, the vaccination rate is defined by
		$v(t)={\hat v(I(t))}$ (cf.~\eqref{loop}), where the function ${\hat v(I)}$ is a continuous descending branch of the Preisach operator; this branch depends on 
		the initial state $\nu^0=\nu^0_\alpha$. Thus, a trajectory of \eqref{2}, \eqref{pre} is simultaneously a trajectory of the ordinary differential system
		 \begin{equation}\label{2'}
		\begin{array}{l}
		\dot I=\beta IS - \delta I,\\
		\dot S= -\beta IS - \tilde v(I)S-\mu S+\mu
		\end{array}
		\end{equation}
		with $\tilde v(\cdot)={\hat v(\cdot)}$ depending on  $\nu^0=\nu^0_\alpha$. Since $\dot I<0$, each trajectory of \eqref{2'} converges to the infection-free equilibrium $(I_*,S_*)$ for any branch $\tilde v(\cdot)={\hat v(\cdot)}$, and the result follows.
		
			If $1>\delta/\beta$ and \eqref{3} holds, then the second equation of \eqref{2} implies $\dot S<0$ for $S\ge \delta/\beta=S^*$ in $\mathfrak{D}$. Hence, all trajectories of \eqref{2}, \eqref{pre} enter the domain
$\{(I,S)\in \mathfrak D: S<S^*\}$ and remain there for all sufficiently large $t$.
In this domain, $\dot I<0$ as follows from the first of equations \eqref{2}.
Therefore, the same argument as we used in the case $\delta/\beta\ge 1$ above shows that all the trajectories of system \eqref{2}, \eqref{pre} converge to the infection-free equilibrium \eqref{infectionfree}.

Finally, assume that \eqref{ineq} holds, and suppose that a trajectory of  system \eqref{2}, \eqref{pre} has at most finite number of intersections with the line $S=S^*$ where $\dot I=0$. Then, after the last intersection, the $I$-component of the trajectory either strictly decreases or strictly increases with $t$.
In either case, the monotonicity of $I(t)$ implies that the trajectory of
the \eqref{2}, \eqref{pre} (after its last intersection with the line $S=S^*$) is simultaneously a trajectory of the ordinary differential system \eqref{2'} where $\tilde v(\cdot)$ is either a descending or an ascending branch of the Preisach operator.
Due to the fact that any branch $\tilde v(I)$ increases, relation \eqref{ineq} implies that
system \eqref{2'} has a unique endemic equilibrium $(I^*,S^*)$ given by
\begin{equation}\label{j1}
\mu (1-S^*)-\beta S^* I^* = S^* \tilde v(I^*), \qquad S^*=\delta/\beta.
\end{equation}
Furthermore, system \eqref{2'}
has a global Lyapunov function (Korobeinikov et al. 2002, Korobeinikov et al. 2004) 
\begin{equation}\label{6}
V(I,S)= S - S^* \ln \frac{S}{S^*}  + I - I^* \ln \frac{I}{I^*} + \frac1\beta \int_{I^*}^I \frac {\tilde v(i)-\tilde v(I^*)}{i}\,d i.
\end{equation}
Indeed, 
\begin{align*}
\dot V = \left (S- {S^*}\right) \left(-\beta I -\tilde v(I)-\mu +\frac{\mu}{S}\right)
+ \left (I- {I^*}\right) (\beta S-\delta) \\+(\tilde v(I)-\tilde v(I^*)) \left(S-\frac{\delta}{\beta}\right),
\end{align*}
where we replace $\mu =-\beta I^* -\tilde v(I^*) +{\mu}/{S^*}$, $\delta =\beta S^*$ to obtain
\[
\dot V = \left (S- {S^*}\right) \left(-\beta (I-I^*) -\tilde v(I) +\tilde v(I^*)-\frac{\mu}{S^*} +\frac{\mu}{S}\right)
+ \beta \left (I- {I^*}\right) (S-S^*)
\]
\begin{equation}\label{j2}
+(\tilde v(I)-\tilde v(I^*)) \left(S-S^*\right)=-\frac{\mu(S-S^*)^2}{S^* S}<0.
\end{equation}
This implies {convergence} to the endemic equilibrium point.
\end{proof} 

\section{Main Results}

\subsection{Poincar\'e-Bendixson type alternative}
Proposition \ref{p1} does not cover those trajectories that have infinitely many intersections with the nullcline $S=S^*$ in the case when relation \eqref{ineq} holds.

\begin{theorem}\label{t1} Let \eqref{ineq} hold.
Any trajectory of system \eqref{2}, \eqref{pre} (starting from any initial 
values from the region $I>0$, $S>0$, $S+I \le 1$ 
and any admissible initial state of the Preisach operator) converges either to an endemic equilibrium or to a simple periodic orbit.
\end{theorem}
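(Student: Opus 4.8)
The plan is to treat system \eqref{2}, \eqref{pre} as a planar \emph{switched} system and run a Poincar\'e--Bendixson argument on the section $\{S=S^\ast\}$, $S^\ast:=\delta/\beta$, using the branch Lyapunov functions \eqref{6} to force monotone convergence of the oscillation amplitude. By Proposition \ref{p1} I may assume the trajectory crosses the nullcline $\{S=S^\ast\}$ infinitely many times, since otherwise it already converges to an endemic equilibrium. Because $\dot I=\beta I\,(S-S^\ast)$ with $I>0$ on $\mathfrak D$, the quantity $S-S^\ast$ has constant sign between consecutive crossings, so $I(t)$ is strictly monotone on each such segment and every crossing is a local extremum of $I$ (an upward-to-downward crossing a local maximum, the reverse a local minimum). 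On each monotonicity interval the input--output pair follows a single branch of the Preisach operator (ascending $\bar v$ while $I$ increases, descending $\hat v$ while it decreases), so there the trajectory coincides with a trajectory of the autonomous system \eqref{2'} for the corresponding $\tilde v$, and the Lyapunov function \eqref{6} is strictly decreasing off $\{S=S^\ast\}$ by \eqref{j2}.

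First I would set up the half-return map $\mathcal H$ on $\{S=S^\ast\}$ sending a local minimum $(m_k,S^\ast)$ (together with its Preisach state) to the next local maximum $(M_k,S^\ast)$, and the full-return map $\mathcal T=\mathcal H\circ\mathcal H$ sending $m_k$ to $m_{k+1}$. Well-posedness and continuity of these maps follow from the flow-invariance and boundedness of $\mathfrak D$, the global Lipschitz continuity \eqref{LipP*}, and continuous dependence on initial data for \eqref{2'}. The crux is to prove that the successive maxima $\{M_k\}$ and minima $\{m_k\}$ are each monotone. Here the order-preserving structure of the Preisach operator enters decisively: once the oscillation is nested, the wiping-out/return-point-memory property forces the Preisach state at each extremum to be determined by the current extremal value, so that $\mathcal T$ acts as a scalar monotone continuous map on an interval of $\{S=S^\ast\}$. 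I would establish this monotonicity by combining the strict decay of \eqref{6} along each monotone segment with a bound on the jump $V_{\hat v}-V_{\bar v}$ incurred when the active branch switches at an extremum; since both switches occur on $\{S=S^\ast\}$, which is exactly where $\dot V=0$ in \eqref{j2}, the only quantity to estimate is the branch-dependent difference of \eqref{6}, governed by the hysteresis-loop width \eqref{width}, \eqref{L}.

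With monotonicity in hand the bounded monotone sequences converge, $M_k\to M_\infty$ and $m_k\to m_\infty$ with $m_\infty\le M_\infty$, and by continuity $m_\infty$ is a fixed point of $\mathcal T$. If $M_\infty=m_\infty$, the extrema collapse to a single point of $\{S=S^\ast\}$ and the trajectory converges to an endemic equilibrium \eqref{endemic}. If $M_\infty>m_\infty$, the limiting input oscillates simply between $m_\infty$ and $M_\infty$, and by the monocyclicity of the Preisach operator this simple periodic input produces a periodic output tracing a fixed hysteresis loop \eqref{loop}; hence the full triplet $(I,S,\nu)$ is asymptotic to a simple periodic orbit. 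To make the attraction rigorous I would show this limiting loop is invariant and that the trajectory, squeezed between nested loops of converging amplitude, is drawn to it, the uniform estimate \eqref{LipP*} guaranteeing that the Preisach component converges in $L_1(\Pi;\mathbb R)$ together with the planar component.

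The main obstacle I anticipate is establishing the order-preservation of $\mathcal T$ in the presence of hysteresis memory: unlike a genuine planar flow, trajectories here live in the infinite-dimensional phase space $\mathfrak U$, so the classical Jordan-curve argument is unavailable and must be replaced by the order structure of the Preisach operator combined with the quantitative Lyapunov estimates above. Care is also needed to exclude subtler recurrence, such as non-nested oscillations whose extrema are never eventually monotone; ruling this out is precisely where the interplay between the loop-width bound \eqref{L} and the strict decay of \eqref{6} along segments must be exploited.
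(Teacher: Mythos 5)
Your overall skeleton (Poincar\'e section on the nullcline $S=S^*=\delta/\beta$, monotone sequences of crossing values, limits giving either an endemic equilibrium or a simple periodic orbit) matches the paper, but the step on which everything hinges is wrong. You propose to obtain monotonicity of the extrema from the decay of the Lyapunov functions \eqref{6} combined with estimates of the branch-dependent jump controlled by the loop width \eqref{width}, \eqref{L}. That mechanism cannot work here: Theorem \ref{t1} assumes nothing about the size of $L$, and for wide loops the system genuinely has attracting periodic orbits approached by \emph{expanding} oscillations (see Figure \ref{fig4}); the Lyapunov-plus-loop-width machinery is precisely what the paper uses in Theorem \ref{t2} to \emph{exclude} periodic orbits when $L$ is small, so if it yielded monotone contraction in general it would prove too much. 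The paper's actual mechanism is elementary and entirely different: between consecutive crossings $I_k=I(t_k)$ of the nullcline, $I(t)$ is strictly monotone, so each arc is a graph $S=S(I)$ solving the scalar ODE $dS/dI=(-\beta IS-\tilde v(I)S-\mu S+\mu)/(\beta I(S-S^*))$, where $\tilde v$ is the branch active on that arc; the Preisach memory orders the branches of two consecutive arcs lying on the same side of the nullcline (e.g.\ $\bar v_{k'+2}(I)>\bar v_{k'}(I)$ when $I_{k'}<I_{k'+2}$), the right-hand sides are therefore ordered, and a comparison argument shows the two arcs cannot cross except at endpoints. This non-crossing immediately gives: once $I_{k'}<I_{k'+2}<I_{k'+1}$ (or the analogous relation for maxima) occurs, nesting propagates forever, producing the monotone interleaved sequences, while equality of consecutive crossing values gives exact periodicity by forward uniqueness. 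No Lyapunov function and no bound on $L$ enters this step. Your closing worry about ``non-nested oscillations whose extrema are never eventually monotone'' is resolved by the same argument: if nesting never occurs at any $k'$, the crossing values are automatically fully expanding (maxima increase, minima decrease), so eventual monotonicity is not an extra case to exclude but a dichotomy that the comparison lemma delivers for free.

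This points to the second gap: you treat the non-nested case as something to be \emph{excluded}, but it cannot be excluded --- it is a genuine regime, and it too converges to a simple periodic orbit. What must be proved there is that the decreasing sequence of minima satisfies $\inf I_k\ge\varepsilon>0$, so the expanding oscillations converge to a periodic orbit rather than degenerate toward $I=0$. This is the \emph{only} place the paper uses the functions \eqref{6}: along each descending arc the corresponding $\hat V_k$ decreases by \eqref{j2}, its values at the right endpoints are uniformly bounded, and $V_k(I,S^*)\ge -I_k^*\ln(I/I_k^*)$ with $\inf_k I_k^*\ge\varepsilon_0>0$ by \eqref{ineq}, so $\inf_k V_k(I,S^*)\to\infty$ as $I\to 0^+$, forcing the lower bound on $\inf I_k$. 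Finally, a smaller but real error: your claim that return-point memory makes the Preisach state at each extremum ``determined by the current extremal value'' fails precisely in the nested regime, where no wiping-out occurs and memory accumulates; consequently the scalar return map $\mathcal T$ you posit is not well defined, which is exactly why the paper compares consecutive arcs directly in the $(I,S)$-plane instead of constructing a return map on the section.
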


\proof

Consider a trajectory $(I(t),S(t))$ which does not converge to an equilibrium point.
Due to Proposition \ref{p1}, it has infinitely many intersections with the line $S=S^*$ at points $I(t_k)=I_k$, $k=1,2,...$\ with $t_k<t_{k+1}$ (where we can assume without loss of generality that $I_1> I_2$).

If for some $k'$ we have $I_{k'} < I_{k'+2} < I_{k'+1}$, then 
let us compare the arc $\Gamma_{k'+2}$ of the trajectory $(I(t), S(t))$  
connecting the points $(I_{k'+2}, S^*)$ and $(I_{k'+3},S^*)$ with its arc $\Gamma_{k'}$ connecting the points $(I_{k'},S^*)$ and $(I_{k'+1},S^*)$. 
{Note that on each arc $\Gamma_{k'}$ the 
	vaccination rate follows a particular branch of the Preisach operator, which we will denote as $\bar v_{k'}$} (cf.~\eqref{loop}).

Both arcs $\Gamma_{k'+2}$ and $\Gamma_{k'}$ lie above the nullcline $S=S^*$, both go from left to right (hence $I_{k'+3}> I_{k'+2}$), and $\Gamma_{k'}$ starts to the left of $\Gamma_{k'+2}$. Since for the internal points of these arcs,
\[
\frac{dS}{dI}=\frac{\dot S}{\dot I}= \frac{-\beta IS -\bar v_i(I)S-\mu S+\mu}{\beta IS - \delta I},\qquad (I,S)\in \Gamma_i, \ i=k', k'+2,
\]
and $\bar v_{k'+2}(I)>\bar v_{k'}(I)$, 
we see that
$\Gamma_{k'}$ and $\Gamma_{k'+2}$ cannot intersect except at the end point, hence $I_{k'}<I_{k'+2}< I_{k'+3}\le I_{k'+1}$ as required.
Further, if $I_{k'+3}= I_{k'+1}$,
 then due to forward uniqueness the trajectory becomes periodic starting from the moment $t=t_{k'+1}$, i.e. $I_{k'+2j+1}=I_{k'+1}$, $I_{k'+2j+2}=I_{k'+2}$ for all $j=1,2,\ldots$
 
 Similarly, relations $I_{k'+1} > I_{k'+3} > I_{k'+2}$ imply
 $I_{k'+1} > I_{k'+3} > I_{k'+4}\ge I_{k'+2}$, and if 
 $I_{k'+4}= I_{k'+2}$, then the trajectory becomes periodic after the {moment}
 $t=t_{k'+2}$.

Combining the above two results, we see that
if either $I_{k'} < I_{k'+2} < I_{k'+1}$ or $I_{k'+1} > I_{k'+3} > I_{k'+2}$ for some $k'$, and the trajectory does not become periodic, 
then
\[
{I_{k'} < } I_{k'+2} < I_{k'+4} < I_{k'+6}< \cdots < I_{k'+5} < I_{k'+3} < I_{k'+1}.
\]
Therefore, the trajectory converges to a periodic orbit oscillating between the points $(I',S^*)$ and $(I'',S^*)$ with $I'=\lim_{j\to \infty} I_{k'+2j}$ and 
$I''=\lim_{j\to \infty} I_{k'+2j+1}$ (or an equilibrium if the two limits coincide).

The only remaining alternative to this scenario is to have either
	\[
	\cdots > I_5 > I_3 > I_{1} >  I_{2} > I_{4} > I_6 > \cdots
	\]
	or
	\[
	\cdots < I_5 < I_3 < I_{1} <  I_{2} < I_{4} < I_6 < \cdots
	\]
for all $I_k$.
In this case, again, the limit is a periodic trajectory oscillating between the points $(I',S^{*})$ and 
$(I'',S^*)$ unless $\inf I_k=0$. However, it is easy to see that actually $\inf I_k\ge \varepsilon>0$. Indeed, on each arc $\Gamma_k$ which lies below the line $S=S^*$, the component $I(t)$ of the solution decreases, and the vaccination rate is given by $v(t)=\hat v_k(I(t))$ for some descending branch $\hat v_k(\cdot)$ of the Preisach operator.
Therefore, the Lyapunov function $V_k(I,S)$ given by \eqref{j1}, \eqref{6} with $\tilde v(\cdot)=\hat v_k(\cdot)$ decreases  along the segment $\Gamma_k$ of the trajectory, hence the value of $V_k(\cdot,\cdot)$ at the left end of $\Gamma_k$ is less than at the right end.
But the functions $V_k(\cdot,S^*): (0,1]\to \mathbb{R}$ are uniformly bounded {for $I\in [\delta^{*},1]$}, {$\delta^{*} > 0$}, 
and satisfy
$
V_k(I,S^*)\ge -I^*_k \ln ({I}/{I^*_k})$, where $I^*_k$ is the solution of \eqref{j1} for $\tilde v(\cdot)=\bar v_k(\cdot)$.
Relation \eqref{ineq} ensures that $\inf I_k^*\ge \varepsilon_0>0$, hence $\inf_k V_k(I ,S^*)\to \infty$ as $I\to 0+$, and consequently the fact that the set of values of $V_k(\cdot,\cdot)$ at the right ends of the arcs $\Gamma_k$ is bounded implies that the left ends satisfy $\inf I_k\ge \varepsilon>0$.
\hfill $\Box$

\subsection{Sufficient conditions for global stability of the set of endemic equilibrium states.}

In the rest of the paper, we derive sufficient conditions which ensure the global convergence to endemic equilibrium states. 
We make the following assumption:

\medskip	
{\bf (A)}  
Each input-output loop of the Preisach operator corresponding to a simple periodic input is  {\em convex}. 
In other words, 
the function ${\bar v(\cdot)}$ in \eqref{loop} is convex and the function $\hat v(\cdot)$ in \eqref{loop} is concave. 

\begin{lemma}\label{l1}  
Let $\bar I^*>0$ and the function $\bar V(I,S)$ be defined by formulas
\eqref{j1}, \eqref{6} with $\tilde v(i)=\bar v(i)$, and
let $\hat I^*$, $\hat V(I,S)$ be defined by the same formulas 
with $\tilde v(i)=\hat v(i)$.
Assumption {\rm (A)} guarantees  that all the level sets of the function $\bar V$ are convex and the intersection of each level set of the function $\hat V$ with the half plane $I\le \hat I^*$ is convex. \textcolor{blue}{}
\end{lemma}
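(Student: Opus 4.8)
The plan is to reduce the statement to the convexity of each Lyapunov function on the relevant region, using the elementary fact that convex functions have convex sublevel sets (so their level curves bound convex regions). The crucial structural observation is that every function of the form \eqref{6} is \emph{separable}: writing $\tilde V(I,S)=A(S)+B(I)$ with $A(S)=S-S^*\ln(S/S^*)$ and $B(I)=I-\tilde I^*\ln(I/\tilde I^*)+\frac1\beta\int_{\tilde I^*}^{I}\frac{\tilde v(i)-\tilde v(\tilde I^*)}{i}\,\dd i$, the Hessian of $\tilde V$ is diagonal. Hence $\tilde V$ is convex exactly when both $A$ and $B$ are convex. Since $A''(S)=S^*/S^2>0$ unconditionally, the whole matter reduces to controlling the sign of $B''$.

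A direct differentiation gives
\[
B''(I)=\frac{1}{I^2}\left(\tilde I^*+\frac1\beta\,g(I)\right),\qquad g(I):=I\,\tilde v'(I)-\tilde v(I)+\tilde v(\tilde I^*),
\]
so I must bound $g$ from below. The key monotonicity fact is that for a convex $\tilde v$ the map $I\mapsto I\,\tilde v'(I)-\tilde v(I)$ is non-decreasing, and for a concave $\tilde v$ it is non-increasing; this follows from the support-line (subgradient) inequalities for $\tilde v$ and does \emph{not} require $\tilde v\in C^2$, which matters because Assumption (A) supplies only convexity/concavity of Lipschitz branches. Thus $g$ is non-decreasing for the convex ascending branch $\bar v$ and non-increasing for the concave descending branch $\hat v$.

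For the ascending branch, monotonicity places the infimum of $g$ at $I\to 0^+$, where $g(0^+)=\bar v(\bar I^*)-\bar v(0)\ge 0$ because $\bar v$ is increasing; hence $g\ge 0$ on all of $(0,1]$ and $B''>0$ throughout, so $\bar V$ is (strictly) convex and all its sublevel sets are convex. For the descending branch, $g$ is non-increasing, so on the half-plane $\{I\le\hat I^*\}$ its infimum is attained at the right endpoint $I=\hat I^*$, where $g(\hat I^*)=\hat I^*\,\hat v'(\hat I^*)\ge 0$ since $\hat v$ is increasing; therefore $g\ge 0$ and $B''>0$ for $I\le\hat I^*$, which yields convexity of $\hat V$ on this half-plane and hence convexity of the intersection of each of its level sets with $\{I\le\hat I^*\}$. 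This last computation also explains why the restriction is necessary: for $I>\hat I^*$ the non-increasing $g$ may become negative, so $B''$ and the convexity of $\hat V$ can be lost.

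I expect the main obstacle to be the rigorous justification of the monotonicity of $g$ (equivalently, the sign of $B''$) without assuming twice-differentiability of $\tilde v$; the clean route is the support-line argument above rather than a pointwise second-derivative estimate. The remaining ingredients—the separability of $\tilde V$, the limit $g(0^+)=\bar v(\bar I^*)-\bar v(0)$ (using $I\,\bar v'(I)\to 0$, valid since a convex Lipschitz function has bounded one-sided derivatives), and the identity $g(\tilde I^*)=\tilde I^*\,\tilde v'(\tilde I^*)$—are routine.
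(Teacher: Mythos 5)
Your proof is correct, and its computational core coincides with the paper's: both exploit the separable form of \eqref{6} (so $V_{IS}=0$), and both reduce the claim to the sign of exactly the same quantity, since your $\beta I^2 B''(I)=\beta\tilde I^*+g(I)$ is precisely the numerator $\beta \tilde I^*+I\tilde v'(I)-\tilde v(I)+\tilde v(\tilde I^*)$ in the paper's key inequality \eqref{convexity}. The differences are in the packaging and in the elementary steps. The paper phrases the reduction through the curvature of the level curves (a quasiconvexity condition), but since it actually proves $V_{II}>0$ it establishes, just as you do, convexity of the function itself; your framing is marginally cleaner for the half-plane statement about $\hat V$, where convexity of the restricted function immediately gives convexity of the intersected sublevel sets. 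For the inequality $g\ge 0$ your route differs: on the ascending branch the paper gets it in one line from the support-line bound $(I-\bar I^*)\bar v'(I)\ge \bar v(I)-\bar v(\bar I^*)$ together with $\bar v'\ge 0$, whereas you prove monotonicity of $g$ and evaluate its limit as $I\to 0^+$; that limit tacitly assumes $\bar v$ is defined and convex down to $I=0$, which a branch of the Preisach operator need not be, but the repair is immediate: evaluate $g$ at the left endpoint $I_{\min}\le \bar I^*$ of the domain, where $g(I_{\min})=I_{\min}\bar v'(I_{\min})+\bar v(\bar I^*)-\bar v(I_{\min})\ge 0$ by monotonicity of $\bar v$ alone. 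On the descending branch the paper is lighter than you: for $I\le \hat I^*$ it observes that $I\hat v'(I)\ge 0$ and $\hat v(\hat I^*)-\hat v(I)\ge 0$ already give $g\ge 0$, so only monotonicity of $\hat v$ is used and the concavity from Assumption (A), which you invoke via the monotonicity of $g$, is not actually needed on that half-plane. Finally, the smoothness issue you flag as the main obstacle is treated no more rigorously by the paper, which also writes $\tilde v'(I)$ freely for the Lipschitz branches, so there is no gap on that score.
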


\proof
 The curvature of the level line of the function $V(I,S)$ is given by 
\[
\kappa = - \frac{V_{SS} (V_I)^2 + V_{II} (V_S)^2 - 2 V_{IS} V_I V_S}{(V_S^2 + V_I^2)^{3/2}}.
\]
We need to show that $\kappa < 0$, i.e.~$V_{SS} (V_I)^2 + V_{II} (V_S)^2 > 0$, which for the function \eqref{6} is equivalent to
\begin{equation}\label{convexity}
0 < \frac{S^*}{S^2} \, V_I^2 + \frac{  \beta I^* + I \tilde v'(I)-\tilde v(I) + \tilde v(I^*)}{ \beta I^2} \, V_S^2.
\end{equation}
For $\tilde v(\cdot)=\bar v(\cdot)$, the convexity of $\bar v$ implies
\[
({I-\bar I^*})\bar v'(I)\ge {\bar v(I)-\bar v(\bar I^*)},
\]
hence 
\[
I \bar v'(I) \ge \bar v(I)-\bar v({\bar I^*})
\]
(because $\bar v$ increases) and \eqref{convexity} follows. For $\tilde v(\cdot)=\hat v(\cdot)$, relation \eqref{convexity} holds in the region $I\le \hat I^*$ because $\hat v(\cdot)$ is an increasing function.
\hfill $\Box$


\begin{theorem}\label{t2}
Let assumptions {\rm (A)} and \eqref{ineq} be satisfied. 
Let the quantity \eqref{L} be sufficiently small. Then, system \eqref{2}, \eqref{pre} has no periodic solutions, and every trajectory converges to an endemic equilibrium point.
\end{theorem}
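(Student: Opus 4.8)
The plan is to use Theorem~\ref{t1} to reduce the statement to the exclusion of simple periodic orbits: once no periodic orbit exists, Theorem~\ref{t1} forces every trajectory to converge to an endemic equilibrium. So I assume, for contradiction, that a simple periodic orbit exists. By Theorem~\ref{t1} it oscillates between two points $(I',S^*)$ and $(I'',S^*)$ with $I'<I''$, the upper arc (where $\dot I>0$, $S>S^*$) following an ascending branch $\bar v$ with Lyapunov function $\bar V$ and equilibrium abscissa $\bar I^*$, and the lower arc (where $\dot I<0$, $S<S^*$) following a descending branch $\hat v$ with Lyapunov function $\hat V$ and equilibrium abscissa $\hat I^*$; the two branches close up into a single hysteresis loop, so $\bar v(I')=\hat v(I')$, $\bar v(I'')=\hat v(I'')$ and $\bar v<\hat v$ on $(I',I'')$ by \eqref{vv}--\eqref{vv''}. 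Inspecting the sign of $\dot S$ on $S=S^*$ shows the arcs can turn around only if $I'<\hat I^*<\bar I^*<I''$.

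The core of the argument is a single identity for the Lyapunov decrease accumulated over one period. Since \eqref{j2} gives $\dot V=-\mu(S-S^*)^2/(S^*S)$ for \emph{both} branches, the total dissipation over one period, after the substitution $\dd t=\dd I/(\beta I(S-S^*))$ on each arc, is
\[
D=\oint \frac{\mu (S-S^*)^2}{S^* S}\,\dd t=\frac{\mu}{\beta}\int_{I'}^{I''}\frac{h(I)}{S_{\mathrm{up}}(I)\,S_{\mathrm{low}}(I)\,I}\,\dd I,\qquad h:=S_{\mathrm{up}}-S_{\mathrm{low}},
\]
where $S_{\mathrm{up}}>S^*>S_{\mathrm{low}}$ are the two branches of the loop viewed as graphs over $I$. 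On the other hand, following $\bar V$ along the upper arc and $\hat V$ along the lower arc, and accounting for the jumps of the Lyapunov function at the two switching points, the same quantity equals $g(I'')-g(I')$ with $g(I):=\hat V(I,S^*)-\bar V(I,S^*)$. Here the equilibrium relation $\bar I^*+\bar v(\bar I^*)/\beta=\hat I^*+\hat v(\hat I^*)/\beta$ (both equal $\mu/\delta-\mu/\beta$) makes the equilibrium-position terms cancel, leaving the clean formula $g'(I)=(\hat v(I)-\bar v(I))/(\beta I)$. Combining, I obtain the \emph{exact} relation $D=\int_{I'}^{I''}(\hat v(I)-\bar v(I))/(\beta I)\,\dd I$, whose right-hand side is controlled from above by the definition \eqref{L} of $L$: since $\hat v(I)-\bar v(I)\le L\,(I''-I')$ on the loop, $D\le \frac{L}{\beta}(I''-I')\ln(I''/I')$.

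The crux is a matching lower bound on $D$ that does \emph{not} come from this identity but from the geometry of the orbit, and this is where I expect the main difficulty. Because $S_{\mathrm{up}},S_{\mathrm{low}}\le 1$ and $I\le 1$, the dissipation is bounded below by $(\mu/\beta)$ times the area enclosed by the loop, so it suffices to show that this area is at least $c\,(I''-I')^2$ for a constant $c>0$ depending only on the fixed model parameters; equivalently, the loop cannot be arbitrarily flat. The idea is to exploit Assumption (A) through Lemma~\ref{l1}: the convexity of the level sets of $\bar V$ (and of $\hat V$ for $I\le\hat I^*$) forces each arc to bulge away from $S=S^*$ by an amount comparable to its horizontal extent. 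Concretely, I would first linearize the planar fields $\Phi_{\bar v},\Phi_{\hat v}$ about their equilibria — where the common dissipation shows each equilibrium is a sink with positive determinant, hence has a fixed height-to-width aspect ratio — and then upgrade this to a global estimate using the convex Lyapunov level sets as barriers that trap the arcs. Proving this $c\,(I''-I')^2$ lower bound rigorously and uniformly over all admissible loops is the step that carries the real work.

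Granting the lower bound, the conclusion is immediate. The proof of Theorem~\ref{t1} shows the turning abscissae are bounded away from zero, $I'\ge\varepsilon>0$, whence $\ln(I''/I')\le (I''-I')/I'\le (I''-I')/\varepsilon$; the upper and lower bounds then give $c\,(I''-I')^2\le \frac{L}{\beta\varepsilon}(I''-I')^2$, that is $L\ge c\beta\varepsilon=:L_0>0$. Therefore, whenever $L<L_0$ the assumed periodic orbit cannot exist, which is the precise meaning of ``$L$ sufficiently small''; combined with Theorem~\ref{t1}, this yields convergence of every trajectory to an endemic equilibrium.
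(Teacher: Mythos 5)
Your reduction to excluding simple periodic orbits, the ordering $I_1<\hat I^*<\bar I^*<I_2$ of \eqref{IhatI}, and your ``exact relation'' are all the same as in the paper: your $D=g(I'')-g(I')$ with $g'(I)=(\hat v(I)-\bar v(I))/(\beta I)$, obtained by cancelling the equilibrium terms through $\beta(\bar I^*-\hat I^*)+\bar v(\bar I^*)-\hat v(\hat I^*)=0$, is precisely the paper's trivial identity combined with \eqref{nl7}--\eqref{nl8}, and your upper bound $D\le (L/\beta)(I''-I')\ln(I''/I')$ is the estimate following \eqref{nl7}. The genuine gap is the matching \emph{lower} bound on the dissipation $D$, which you explicitly defer (``the step that carries the real work''): that step is not a technicality but is essentially the entire content of the paper's proof --- the two convexity-based estimates \eqref{clubs1} and \eqref{clubs2}, which give $D\geq A(I_2-\bar I^*)^2+B(\hat I^*-I_1)^2$, plus the uniform parameter estimates that make them usable.

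Moreover, the bound you propose to prove is false as stated. Convexity (Lemma \ref{l1}) does not force each arc to ``bulge away from $S=S^*$ by an amount comparable to its horizontal extent'': the upper arc is only forced outside the level set of $\bar V$ through $(I_2,S^*)$, whose top lies over $I=\bar I^*$, so its bulge is comparable to $I_2-\bar I^*$, not to $I_2-I_1$; similarly the lower bulge is comparable to $\hat I^*-I_1$. If $\bar I^*$ were close to $I_2$ and simultaneously $\hat I^*$ close to $I_1$, both bulges --- hence the enclosed area --- would degenerate while $I_2-I_1$ stays of order one, so no constant $c$ depending only on the model parameters can exist unconditionally. What excludes this configuration is \eqref{nl8} together with the definition \eqref{L} of $L$: $\bar I^*-\hat I^*\leq (L/\beta)(I_2-I_1)$, whence $(I_2-\bar I^*)+(\hat I^*-I_1)\geq(1-L/\beta)(I_2-I_1)$, which is \eqref{nl10}; you derive \eqref{nl8} on the upper-bound side but never deploy it where it is indispensable. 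A second uniformity gap: all constants must be uniform over the set of \emph{all} candidate periodic orbits, since the branches $\bar v,\hat v$, and with them $S_m$ and $I_1$, depend on the orbit; the paper needs a dedicated subsection to prove the orbit-independent bounds $I_1\geq\rho_1$ in \eqref{finalestimate} and the bound \eqref{last} for $S_m$. Your appeal to ``$I'\geq\varepsilon$ from the proof of Theorem \ref{t1}'' does not fill this, because the $\varepsilon$ there is trajectory-dependent, while concluding ``no periodic orbits whenever $L<L_0$'' requires $L_0$, hence $\varepsilon$, to depend only on $\beta,\delta,\mu,v_{nat}$ and $q$. Finally, the linearization-at-equilibria idea is purely local and supplies none of these uniform global estimates.
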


Due to Theorem \ref{t1}, it suffices to show that system \eqref{2}, \eqref{pre} has no simple periodic solutions if the quantity  $L$ defined by \eqref{L} is sufficiently small. 
An explicit estimate for $L$ will be established in the proof.

\subsection{Discussion}
To explore how the heterogeneity of the response of the susceptible population to the advent of an epidemic can affect dynamics of system \eqref{2}, we consider the aggregate vaccination rate \eqref{pre} with the Gaussian density
\begin{equation}\label{q}
q(\alpha_1,\alpha_2) = A e^{-\frac{(\alpha_1-\alpha_{m_1})^2 + (\alpha_2-\alpha_{m_2})^2}{2\sigma^2}},
\end{equation}
where the normalizing parameter
$A=A(\alpha_{m_1},\alpha_{m_2},\sigma)$ ensures that the integral of $q$ over the domain \eqref{Pi} equals $1$. The limit $\sigma\to 0$ corresponds to the perfectly homogeneous response \eqref{rev}, i.e. the vaccination rate switches from the value $v_{nat}=0$ to the value $v_{int}=1$ at the switching threshold $I=\alpha_{m_2}$ and switches backwards at the threshold $I=\alpha_{m_1}$. Increasing the variance $\sigma^2$ of the (truncated) Gaussian distribution corresponds to increasing the heterogeneity of the response within the susceptible population. 

Figure \ref{fig4} presents an example of the convergence to a periodic cyclic behavior for small $\sigma>0$.
This scenario for system \eqref{2}, \eqref{rev} with one non-ideal relay was studied in Chladn\'a et al. (2020). Further, Figure \ref{fig4} shows that as $\sigma$ increases, the periodic behavior is replaced with the convergence to an endemic equilibrium. This is in agreement with Theorem \ref{t2} because the quantity \eqref{L} decreases with increasing $\sigma$. Indeed, Theorem \ref{t2} ensures the global stability of the set of endemic equilibrium states when the quantity \eqref{L} becomes sufficiently small. 

The trajectory corresponding to the more heterogeneous response ($\sigma=0.01$) in Figure \ref{fig4} converges to an endemic equilibrium state with the densities $I^*=0.00063$, $S^*=0.056$ of the infected and susceptible populations, respectively. The vaccination rate at this equilibrium is $v^*=0.0041$ (week$^{-1}$). The  trajectory corresponding to the more homogeneous response ($\sigma=0.0009$), which converges to the periodic orbit, exhibits a lower initial peak of the infected population during the transient than the trajectory of the heterogeneous system. The average density of the susceptible population for the periodic trajectory, $\bar S=0.057$, is close to $S^*$. The density of the infected population along the periodic trajectory is much higher than $I^*$ at its peaks, but the average density  $\bar I=0.00016$ is significantly lower than $I^*$. This agrees with the fact that 
the average vaccination rate $\bar v=0.023$ (week$^{-1}$) is significantly higher than $v^*$. 

\begin{figure}
  \centering
  \begin{tabular}{@{}c@{}}
    \includegraphics[width=\linewidth]{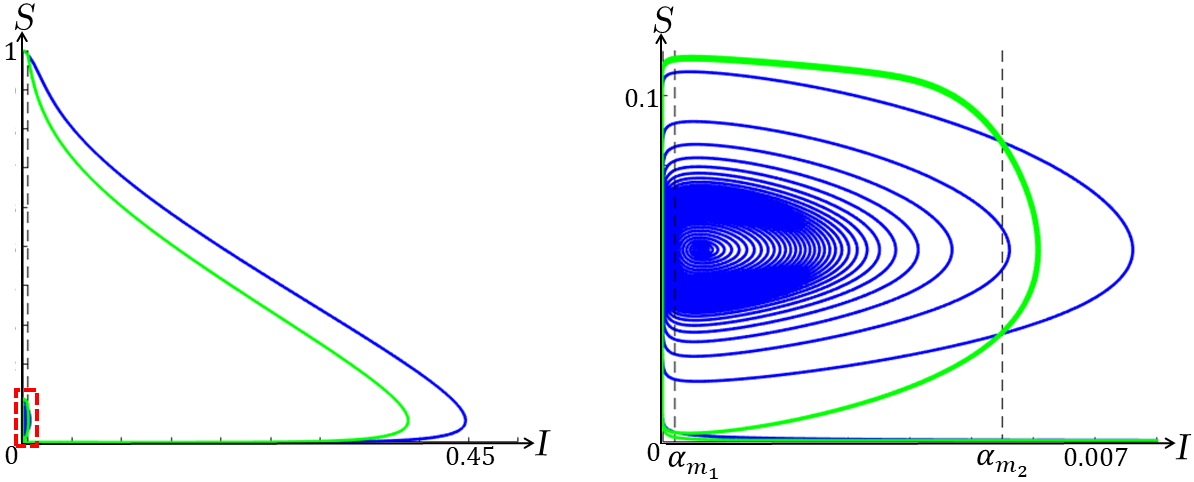} \\[\abovecaptionskip]
    \small (a) \hfil (b)
  \end{tabular}

  \vspace{\floatsep}

  \begin{tabular}{@{}c@{}}
    \includegraphics[width=\linewidth]{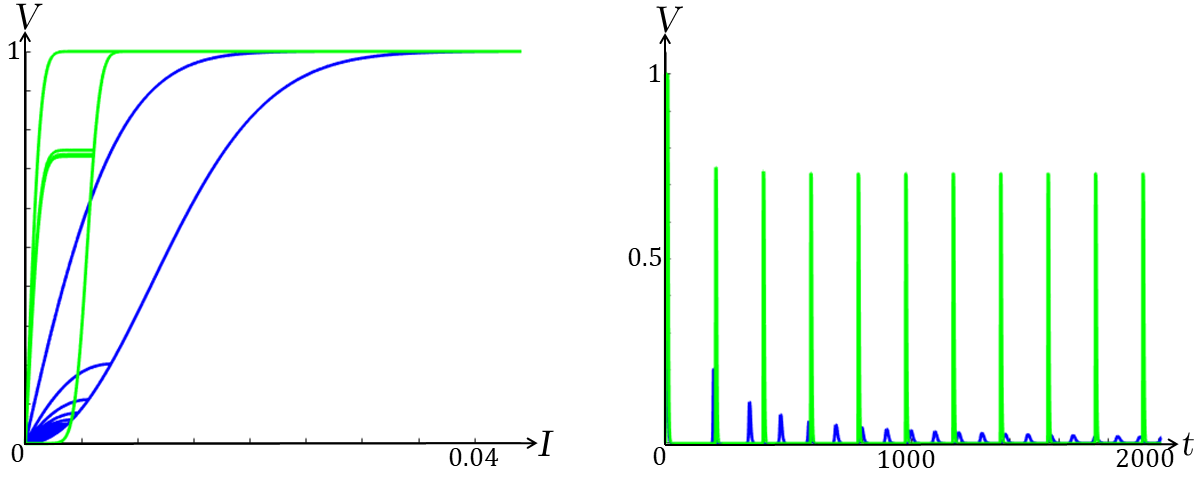} \\[\abovecaptionskip]
    \small (c) \hfil (d)
  \end{tabular}

  \vspace{\floatsep}

  \begin{tabular}{@{}c@{}}
    \includegraphics[width=\linewidth]{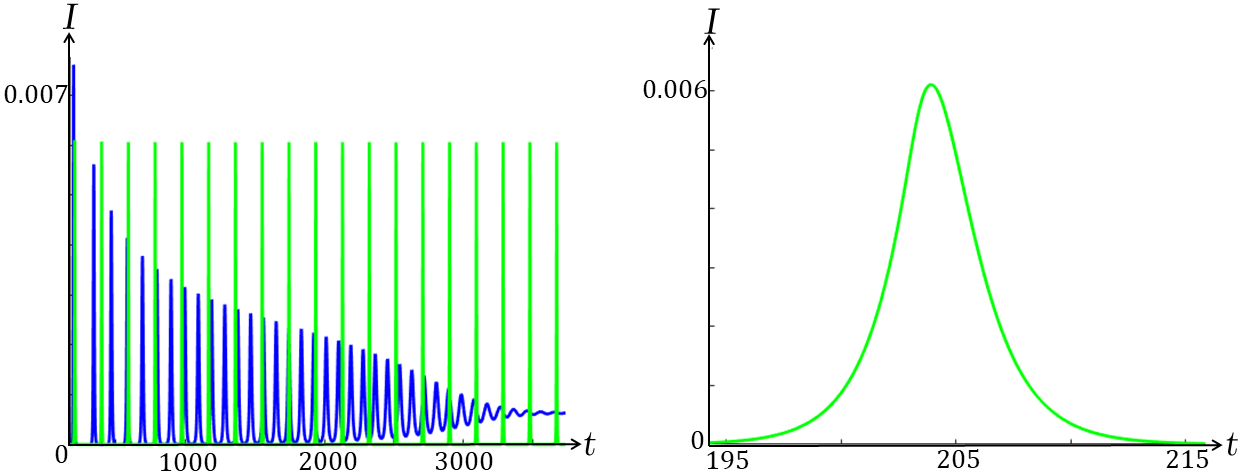} \\[\abovecaptionskip]
    \small (e) \hfil (f)
  \end{tabular}
  \caption{A solution of system \eqref{2} with the vaccination rate defined by relations \eqref{rev} and \eqref{q}. The parameters are $\mu = 0.0006$, $\delta = 0.6$, $\beta = 10.8$ (week$^{-1}$), $\alpha_{m_1} = 0.0002$, $\alpha_{m_2} = 0.0055$. The corresponding basic reproduction number is $R_0=18$. {\bf (a)} The green and blue trajectories correspond to $\sigma = 0.0009$ and $\sigma = 0.1$, respectively. The initial conditions for both trajectories, $I(0)=10^{-5}, S(0)=1-I(0)$, correspond to a small number of infected individuals in a fully susceptible population. {\bf (b)} Zoom into the region marked by the red box on panel (a). The green trajectory converges to a cycle; the blue trajectory converges to an endemic equilibrium. {\bf (c)} Hysteresis loops on the $(I, v)$-plane for both trajectories using the same color code.
{\bf (d, e)} Time traces of the infected population and the vaccination rate. The time unit is one week. {\bf (f)} Zoom into one pulse of the infected population for the green trajectory.}\label{fig4}
\end{figure}

\section{Proof of Theorem \ref{t2}}
The following inequalities will be systematically used: 
\begin{equation}\label{log1}
\ln (1 + x) \geq \frac{x}{1+x}, \qquad x \geq -1;
\end{equation}
\begin{equation}\label{log2}
\frac{x(2+x)}{2(1+x)}\geq   \ln (1 + x) , \qquad x \geq 0;
\end{equation}
\begin{equation}\label{log3}
\ln (1 + x) \leq \frac{2x}{2+x}, \qquad -1< x \leq 0.
\end{equation}

We prove the theorem by contradiction. Let us assume that there exists a simple periodic solution $(I(t), S(t))$, for which $I(t)$ increases from $I_1$ to $I_2$ and then decreases from $I_2$ to $I_1$ on a period with  $I_1 <I_2$.
Let us denote by $\bar v(I)$ and $\hat v(I)$ the two branches of the Preisach operator corresponding to the increasing $I$ and decreasing $I$ of this solution, respectively, hence relations 
\eqref{loop}\,--\,\eqref{ii}
hold.
Then system \eqref{2}, \eqref{pre} has an endemic equilibrium $(\bar I^*,S^*)$ defined by equation \eqref{j1} with $\tilde v(\cdot)=\bar v(\cdot)$ and 
an endemic equilibrium $(\hat I^*,S^*)$ defined by equation \eqref{j1} with $\tilde v(\cdot)=\hat v(\cdot)$.
Further, since $(I_1, S^*)$ and $(I_2, S^*)$ are the turning points of the periodic solution,
\begin{equation*}
\bar v(I_1)=\hat v(I_1) < \mu \left(\frac{\beta}{\delta} - 1\right) - \beta I_1; \quad \ \ \mu \left(\frac{\beta}{\delta} - 1\right) - \beta I_2  < \bar v(I_2)=\hat v(I_2).
\end{equation*}
On the other hand, according to \eqref{j1}, 
\[
\bar v(\bar I^*)=\mu \left(\frac{\beta}{\delta} - 1\right) - \beta \bar I^*, \quad \ \
\hat v(\hat I^*)=\mu \left(\frac{\beta}{\delta} - 1\right) - \beta \hat I^*.
\] 
Combining these four relations with \eqref{vv} and recalling that $\bar v$ and $\hat v$ are increasing functions, we see that
\begin{equation}\label{IhatI}
I_1 < \hat I^* < \bar I^* < I_2.
\end{equation}

Recall the definition of the functions $\bar V(I,S)$ and $\hat V(I,S)$, see Lemma \ref{l1}.
We now consider the following trivial identity
\begin{align}
\bigl( \bv(I_1,S^*)-  \bv(I_2,S^*) \bigr)  + \bigl(\hv(I_2,S^*) - \hv(I_1,S^*)  \bigr)  \nonumber \\= \bigl( \hv(I_2,S^*)-\bv(I_2,S^*)  \bigr)+ \bigl( \bv(I_1,S^*) - \hv(I_1,S^*) \bigr) \nonumber
 \end{align}
 and estimate the differences participating in it to establish a lower bound for $L$.


\subsection{Estimation of $\bv(I_1,S^*)-\bv(I_2,S^*)$. }
Dividing \eqref{j2} by the first equation of \eqref{2}, we obtain
\begin{equation}\label{j3}
\frac{d \bv}{d I} = -\mu \frac{S - S^*}{\beta I S S^*}= - \frac{\mu}{\beta I S^*}\left(1- \frac{S^*}{S}\right)
\end{equation}
along the trajectory.
For the part $\bar \Gamma$ of the periodic trajectory with increasing $I$ and $S>S^*$, this relation 
implies
\begin{equation}\label{monya}
\frac{d \bv}{d I}<0, \qquad I_1<I<I_2, \ (I,S)\in \bar \Gamma.
\end{equation}
Therefore,
this segment of the trajectory
lies outside the level set $\bv(I,S)\le \bv(I_2, S^*)$. The top point (with the largest $S$) of the level line 
$\bv(I,S)= \bv(I_2, S^*)$
is defined by $\partial \bv/ \partial I = 0$, i.e. $\beta I+ \bar v(I) = \beta \bar I^*+ \bar v(\bar I^*),$ which by monotonicity of $\bar v$ implies 
$I = \bar I^*$. Denoting the $S$-component of this point as $S_M$, we see that
\begin{equation}\label{j5}
\bv(I_2, S^*)= \bv(\bar I^*, S_M) = S_M - S^* \ln \frac{S_M}{S^*} + \bar I^*.  
\end{equation}


Let us fix a positive $h < S_M - S^*$.
Because the level set is convex (see Lemma \ref{l1}),
there is a unique point $(I_b,S_b)$ on the level line $\bar V(I,S)=\bv(I_2, S^*)$ with 
$S_b=S^*+h$, $I_b\ge \bar I^*$. 
The convexity of the level set
$\bar V(I,S)\le\bv(I_2, S^*)$ also implies that the point $(I_b,S^*+h)$ lies above the line segment
connecting the points $(\bar I^*,S_M)$ and $(I_2,S^*)$ because all three points lie on the boundary of this set. Therefore,  
\begin{equation}\label{nl2}
I_b - \bar I^* \ge \left( 1 - \frac{h}{S_M - S^*}\right) (I_2 - \bar I^*).
\end{equation} 
Since the part $\bar \Gamma$ of the periodic trajectory lies outside the level set
$\bar V(I,S)\le\bv(I_2, S^*)$ and the line segment $S=S^*+h$, $\bar I^*\le I\le I_b$ belongs to this set,
we have $S\ge S^*+h$ for  $(I,S)\in \bar\Gamma$, $\bar I^*\le I\le I_b$. Therefore, 
on the interval $[\bar I^*,I_b]$, relation \eqref{j3} implies
\[
\frac{d \bv}{d I} = - \frac{\mu}{\beta I S^*}\left(1- \frac{S^*}{S}\right) \leq - \frac{\mu}{\beta I S^*}\left(1- \frac{S^*}{S^* + h}\right) = - \frac{\mu h}{\beta S^* (S^* + h) I}.
\]
We integrate this inequality along the trajectory
over the interval $I \in [\bar I^*,I_b]$ and using the monotonicity of $\bv$ which follows from \eqref{monya}, we obtain
\[ 
\bv(I_2, S^*) - \bv(I_1, S^*) \leq \bv(I_b, S^*) - \bv(\bar I^*, S^*) \leq - \frac{\mu h}{\beta S^* (S^* + h)} \ln \frac{I_b}{\bar I^*},
\]
hence 
\begin{equation}\label{nl1}
\bv(I_1, S^*) - \bv(I_2, S^*) \geq \frac{\mu}{\beta S^*} \max_{h\le S_M - S^*}\frac{h}{S^* + h} \ln \frac{I_b}{\bar I^*}.
\end{equation}
Using inequality   \eqref{log1} and relation \eqref{nl2}, we estimate the right hand side of \eqref{nl1} as follows: 
\begin{align}\nonumber
&\frac{\mu}{\beta S^*} \max_{h\le S_M - S^*}\frac{h}{S^* + h} \ln \frac{I_b}{\bar I^*}  \geq \frac{\mu}{\beta S^*} \max_{h\le S_M - S^*}\frac{h}{S^* + h}\cdot \frac{I_b - \bar I^*}{I_b}\\ \nonumber
& \geq \frac{\mu}{\beta S^*} \max_{h\le S_M - S^*}\frac{h}{S_M}\left(1-\frac{h}{S_M - S^*}\right) \frac{I_2 - \bar I^*}{I_b} \\ \nonumber
& = \frac{\mu}{\beta S^*S_M {I_b} } (I_2 - \bar I^*) (S_M - S^*) \max_{h\le S_M - S^*}\frac{h}{S_M-S^*}\left(1-\frac{h}{S_M - S^*}\right)\\ \nonumber
& = \frac{\mu}{4\beta S^*S_M {I_b}} (I_2 - \bar I^*) (S_M - S^*).
\end{align}
Therefore, 
\begin{equation}\label{1stestimate}
\bv(I_1, S^*) - \bv(I_2, S^*) \geq \frac{\mu}{4\beta S^*S_M {I_b}} (I_2 - \bar I^*) (S_M - S^*).
\end{equation}

In order to estimate $S_M - S^*$, we evaluate the function $\bv$ at the point $(I_2,S^*)$
using \eqref{6} and substitute the result into \eqref{j5} to obtain
\begin{align}
S_M - S^* - S^* \ln \frac{S_M}{S^*}  = I_2 - \bar I^* - \bar I^* \ln \frac{I_2}{\bar I^*} + 
\frac{1}{\beta} \int_{\bar I^*}^{I_2} \frac{\bar v(i) - \bar v(\bar I^*)}{i} \dd i.\label{nl3}
\end{align} 
We  estimate the left hand side of \eqref{nl3}  from above  using
\begin{align*}
&S_M - S^* - S^* \ln \frac{S_M}{S^*} = S^* \left[\frac{S_M}{S^*} -1 -\ln\left(1 + \frac{S_M}{S^*} -1\right)\right] \\
&\leq 
S^*\frac{\left(\frac{S_M}{S^*} -1\right)^2}{\frac{S_M}{S^*} } = \frac{(S_M - S^*)^2}{S_M}.
\end{align*}
Further, we find a lower bound of the right hand side of \eqref{nl3} using estimate \eqref{log2}:  
\begin{align*}
I_2 - \bar I^* - \bar I^* \ln \frac{I_2}{\bar I^*} + \frac{1}{\beta} \int_{\bar I^*}^{I_2} \frac{\bar v(i) - \bar v(\bar I^*)}{i} \dd i \geq I_2 - \bar I^* - \bar I^* \ln \frac{I_2}{\bar I^*}\\
 = {\bar I^*}\left[\frac{I_2}{\bar I^*} - 1 - \ln\left(1 + \frac{I_2}{\bar I^*} - 1\right)\right] \geq \bar I^* \frac{\left(\frac{I_2}{\bar I^*} - 1\right)^2}{2\frac{I_2}{\bar I^*}} = \frac{(I_2 - \bar I^*)^2}{2I_2}.
\end{align*}
Combining the last two inequalities with \eqref{nl3}, we obtain 
\[
\frac{(S_M - S^*)^2}{S_M} \geq \frac{(I_2 - \bar I^*)^2}{2I_2}, 
\]
hence
\[
S_M - S^* \geq \sqrt{\frac{S_M}{2I_2}}(I_2 - \bar I^*).
\]
Substituting this relation into \eqref{1stestimate}, we finally arrive at
\begin{equation}\label{clubs1}
\bv(I_1, S^*) - \bv(I_2, S^*) \geq 
\frac{\mu}{4\beta S^* I_b \sqrt{2 S_M I_2}} (I_2 - \bar I^*)^2 \geq
\frac{\mu}{4\sqrt{2}\,\delta } (I_2 - \bar I^*)^2,
\end{equation}
where the last inequality holds due to $I_b\le I_2\le 1$, $S_M\le 1$ and $S^*=\delta/\beta$.

\subsection{Estimation of $\hv(I_1,S^*)-\hv(I_2,S^*)$. }
Now we consider the lower part $\hat \Gamma$ of the trajectory with decreasing
$I$ and $S<S^*$. We slightly modify the above argument. The relation
\begin{equation}\label{j6}
\frac{d \hv}{d I} = -\mu \frac{S - S^*}{\beta I S S^*}= - \frac{\mu}{\beta I S^*}\left(1- \frac{S^*}{S}\right),
\end{equation}
which is similar to \eqref{j3}, implies
\begin{equation}\label{monya'}
\frac{d \hv}{d I}>0, \qquad I_1<I<I_2, \ (I,S)\in \hat \Gamma.
\end{equation}
Hence, $\hat\Gamma$
lies outside the level set $\hv(I,S)\le \hv(I_1, S^*)$. 
The bottom point (with the smallest $S$) of the level line 
$\hv(I,S)= \hv(I_1, S^*)$
is defined by $\partial \hv/ \partial I = 0$, therefore $I = \hat I^*$ at this point. Denoting its $S$-component by $S_m$, we obtain
\begin{equation}\label{j7}
\hv(I_1, S^*)= \hv(\hi^*, S_m) = S_m - S^* \ln \frac{S_m}{S^*} + \hi^*,  
\end{equation}
which is similar to \eqref{j5}. We fix a positive $h < S^* - S_m$. Using the convexity of the intersection of the level set $\hv(I,S)\le \hv(I_1, S^*)$ with the half-space $I\le \hat I^*$ (see Lemma \ref{l1}), we establish the existence of a unique point on the level line $\hat V(I,S)=\bv(I_1, S^*)$ with the coordinates $(I_c,S_c)$ satisfying
$S_c=S^*-h$, $I\leq \hat I^*$.
Arguing as before (cf.~\eqref{nl2}), we obtain 
\begin{equation}\label{nl5}
\hi^* - I_c \ge  \left( 1 - \frac{h}{S^* - S_m}\right) (\hi^* - I_1).
\end{equation} 
Since $\hat \Gamma$ lies outside the level set
$\hat V(I,S)\le\hv(I_1, S^*)$,
the $S$-coordinate of the points $(I,S)\in \hat \Gamma$ with $I\in [I_c, \hat I^*]$ satisfies 
$S\le S^*-h$, hence \eqref{j6} implies
\[
\frac{d \hv}{dI} \geq  \frac{\mu h}{\beta S^* I (S^* - h)} ,
\]
which after integration over $ [I_c, \hi^*]$,  using also \eqref{monya'}, gives
\[\hv(I_2, S^*) - \hv(I_1, S^*) \geq \hv(\hi^*, S^*) - \hv(I_c, S^*) \geq \frac{\mu h}{\beta S^* (S^* - h)} \ln \frac{\hi^*}{I_c}.
\]
Therefore,
\begin{equation}\label{nl4}
\hv(I_2, S^*) - \hv(I_1, S^*) \geq \frac{\mu}{\beta S^*} \max_{h\le S^*-S_m}\frac{h}{S^* - h} \ln \frac{\hi^*}{I_c}.
\end{equation}
Using relation \eqref{log1} 
and \eqref{nl5}, the right hand side of  equation \eqref{nl4} can be estimated  as follows:
\begin{align*} 
&\frac{\mu}{\beta S^*} \max_{h\le S^*-S_m}\frac{h}{S^* - h} \ln \frac{\hi^*}{I_c} \geq \frac{\mu}{\beta S^*} \max_{h\le S^*-S_m}\frac{h}{S^*} \ln \left( 1 + \frac{\hi^* - I_c}{I_c}\right)\\
& \geq \frac{\mu}{\beta S^*} \max_{h\le S^*-S_m}\frac{h(\hi^* - I_c)}{S^* \hi^*} 
 = \frac{\mu}{\beta S^*} \max_{h\le S^*-S_m}\frac{h}{S^*\hi^*}\left(1-\frac{h}{S^*-S_m}\right)(\hi^* - I_1)\\
 &= \frac{\mu}{\beta (S^*)^2\hi^* } (\hi^* - I_1) (S^* - S_m) \max_{h\le S^*-S_m}\frac{h}{S^* - S_m}\left(1-\frac{h}{S^* - S_m}\right)\\
& = \frac{\mu}{4\beta (S^*)^2 \hi^*} (\hi^* - I_1) (S^* - S_m).
\end{align*}
Hence,
\begin{equation}\label{nl4'}
\hv(I_2, S^*) - \hv(I_1, S^*) \geq \frac{\mu}{4\beta (S^*)^2 \hi^*} (\hi^* - I_1) (S^* - S_m).
\end{equation}
In order to estimate $S^* - S_m$, we evaluate $\hat V(I_1,S^*)$ using \eqref{6} and rewrite \eqref{j7} equivalently as
\begin{align}
S^* - S_m + S^* \ln \frac{S_m}{S^*} = \hi^* - I_1 + \hi^* \ln \frac{I_1}{\hi^*} - 
\frac{1}{\beta} \int_{\hi^*}^{I_1} \frac{\hp(i) - \hp(\hi^*)}{i} \dd i.\label{nl6}
\end{align} 
A lower bound of the left hand side of \eqref{nl6} using \eqref{log1} 
is 
\[S^* - S_m + S^* \ln \frac{S_m}{S^*} \geq S^* - S_m + S^* - \frac{(S^*)^2}{S_m} = -\frac{(S^* - S_m)^2}{S_m}.
\]
An upper estimate of the right hand side of \eqref{nl6} using  \eqref{log3}  and the monotonicity of $\hat v$ 
is
\begin{align*}
&\hi^* - I_1 + \hi^* \ln \frac{I_1}{\hi^*} - \frac{1}{\beta} \int_{\hi^*}^{I_1} \frac{\hp(i) - \hp(\hi^*)}{i} \dd i \leq 
\hi^* - I_1 + \hi^* \ln \frac{I_1}{\hi^*} \\
&\leq \hi^* - I_1 + \hi^* \frac{2\left(\frac{I_1}{\hi^*}-1\right)}{1+\frac{I_1}{\hi^*}}=-\frac{(\hi^* - I_1)^2}{I_1 + \hi^*}.
\end{align*}
Combining the previous two inequalities with \eqref{nl6}, we obtain
\[
\frac{(S^* - S_m)^2}{S_m} \geq \frac{(\hi^* - I_1)^2}{I_1 + \hi^*}, 
\]
hence
\[
S^* - S_m \geq (\hi^* - I_1)\sqrt{\frac{S_m}{I_1 + \hi^*}},
\]
and \eqref{nl4'} implies
\begin{equation}\label{clubs2} 
\hv(I_2, S^*) - \hv(I_1, S^*)
\geq \frac{\mu}{4\beta (S^*)^2 \hi^*}\sqrt{\frac{S_m}{I_1 + \hi^*}}\, (\hi^* - I_1)^2
 \geq \frac{\mu \sqrt{S_m}}{4  \sqrt{{2}}\, \delta  } (\hi^* - I_1)^2,
\end{equation}
where we also use that $I_1\le \hat I^*\le 1$, $S^*\le 1$ and $\beta S^*=\delta$.

\subsection{Estimation of $ \bv(I_1,S^*) - \bv(I_2,S^*) + \hv(I_2,S^*)- \hv(I_1,S^*)$. }
Using the definition \eqref{6} of the Lyapunov function, we can write
\begin{align}\nonumber
&\bv(I_1,S^*) - \bv(I_2,S^*) + \hv(I_2,S^*)- \hv(I_1,S^*) = \nonumber \\
&= - \bar I^* \ln \frac{I_1}{\bar I^*}+\frac{1}{\beta} \int_{\bar I^*}^{I_1} \frac{\bar v(i) - \bar v(\bar I^*)}{i} \dd i 
+ \bar I^* \ln \frac{I_2}{\bar I^*}-\frac{1}{\beta} \int_{\bar I^*}^{I_2} \frac{\bar v(i) - \bar v(\bar I^*)}{i} \dd i   \nonumber \\
&- \hi^*\ln \frac{I_2}{\hi^*}+\frac{1}{\beta} \int_{\hi^*}^{I_2} \frac{\hp(i) - \hp(\hi^*)}{i} \dd i  
+ \hi^* \ln \frac{I_1}{\hi^*}-\frac{1}{\beta} \int_{\hi^*}^{I_1} \frac{\hp(i) - \hp(\hi^*)}{i} \dd i   \nonumber \\
&= \bar I^* \ln \frac{I_2}{I_1} - \hi^*\ln \frac{I_2}{I_1} -\frac{1}{\beta} \int_{I_1}^{I_2} \frac{\bar v(i)- \bar v(\bar I^*)}{i} \dd i  + \frac{1}{\beta} \int_{I_1}^{I_2} \frac{\hp(i) - \hp(\hi^*)}{i} \dd i  \nonumber \\
&= \left[(\bar I^* - \hi^*) + \frac{1}{\beta} \bar v(\bar I^*) -\frac{1}{\beta} \hp(\hi^*)\right]\ln \frac{I_2}{I_1} + \frac{1}{\beta} \int_{I_1}^{I_2} \frac{\hp(i) - \bar v(i)}{i} \dd i =  \nonumber \\ &= \frac{1}{\beta} \int_{I_1}^{I_2} \frac{\hp(i) - \bar v(i)}{i} \dd i, \label{nl7}
\end{align}
where we use the fact that 
\begin{equation}\label{nl8}
\beta(\bar I^* - \hi^*) + \bar v(\bar I^*) - \hp(\hi^*) = 0.
\end{equation}
Indeed, the  fixed points $(\hat I^*,S^*)$, $(\bar I^*,S^*)$  of \eqref{2}, \eqref{pre} (with $v=\hat v, \bar v$, respectively) satisfy the equations
\begin{align}
-\beta S^* \hi^* - \hp(\hi^*)S^* + \mu(1-S^*) = 0,\label{al1}\\
-\beta S^* \bar I^* - \bar v(\bar I^*)S^* + \mu(1-S^*) = 0,\nonumber 
\end{align}
and therefore taking their difference gives \eqref{nl8}.
Equations \eqref{nl7} imply  
\[
\bv(I_1,S^*) - \bv(I_2,S^*) + \hv(I_2,S^*)- \hv(I_1,S^*)  \leq \frac{I_2-I_1}{\beta I_1} \max_{I_1 \le I \le I_2}(\hp(I) - { \bar v(I)}).
\]
Now, we combine this relation with \eqref{clubs1} and \eqref{clubs2} to obtain 
\[
0 \leq -\frac{\mu}{ \sqrt{2}\,\delta } (I_2 - \bar I^*)^2  - \frac{\mu \sqrt{S_m} }{4\sqrt{2}\,\delta } (\hi^* - I_1)^2 
+ \frac{I_2-I_1}{\beta I_1} \max_{I_1 \le I \le I_2}(\hp(I) - \bar v(I))
\]
and further,
\begin{equation}
A (I_2 - \bar I^*)^2  +B (\hi^* - I_1)^2 \leq
\frac{L(I_2-I_1)^2}{\beta I_1}, \label{nl11}
\end{equation}
where $L$ is defined by \eqref{L} and 
\[
A := \frac{\mu}{4\sqrt{2}\,\delta}, \qquad
B := \frac{\mu\sqrt{S_m}}{4   \sqrt{2} \,\delta}.
\]
Since the function $\hat v$ increases, it follows from \eqref{IhatI} and \eqref{nl8} that
\[
\bar I^* - \hi^* \leq \frac{1}{\beta} (\hp({ \bar I^*}) - \bar v(\bar I^*)) \leq \frac{1}{\beta} \max_{I_1 \leq I  \leq I_2} (\hp(I) - \bar v(I))
\leq \frac{L}{\beta}(I_2 - I_1) ,
\]
i.e.
\begin{equation}
I_2-\bar I^*\ge 0, \quad \ \ \hat I^*-I_1\ge 0, \quad \ \ I_2-\bar I^* + \hat I^*-I_1\geq \Big(1-\frac{L}{\beta}\Big)(I_2-I_1),
 \label{nl10}
\end{equation}
where $L/\beta<1$ according to \eqref{xxx}.

Let us find the minimum value $F_{min}$ of the function
$F(x, y)=Ax^2+By^2$ under the constraints $x, y\ge 0$ and $x+y \ge (1- L/\beta)(I_2-I_1).$
Clearly, the minimum value is achieved for $x+y = (1- L/\beta)(I_2-I_1)$ and equals
\[
F_{min}=\frac{AB}{A+B}\left(1-\frac{L}{\beta}\right)^2 (I_2-I_1)^2.
\]
Hence, due to \eqref{nl10}, the left hand side of \eqref{nl11} satisfies
\[
A(I_2 - \bar I^*)^2 + B(I_1 - \hi^*)^2 \geq 
F_{min}
 \]
and \eqref{nl11} implies
\[
\frac{AB}{A+B}\left(1-\frac{L}{\beta}\right)^2 \leq \frac{L}{\beta I_1}.
\]
Recalling the definition of $A$ and $B$, this is equivalent to
\begin{equation}\label{j9}
I_1\,\frac{\beta}{L}  \left(1-\frac{L}{\beta}\right)^2 \leq \frac{4\sqrt{2}\,\delta}{\mu} \left(1+ \frac{1}{\sqrt{S_m}} \right).
\end{equation}

\subsection{
	Estimates of $I_1$, $S_m$.}
Denote
\begin{equation}\label{v}
v_{max}:= \iint_{0\le \alpha_1 <  \alpha_2\leq 1} q(\alpha_1,\alpha_2) \dd \alpha_1 \dd \alpha_2.
\end{equation}
It follows from \eqref{monya'}  that $\hv(I_1, S^*) \leq \hv(I_2, S^*)$. Using formula \eqref{6} for $\hat V$, this implies 
\begin{equation}\label{jadd}
I_1 - \hi^* \ln\frac{I_1}{\hi^*} \leq I_2 - \hi^* \ln\frac{I_2}{\hi^*} + \frac{v_{max}}{\beta}\ln\frac{I_2}{\hi^*} - \frac{\hp(\hi^*)}{\beta}\ln\frac{I_2}{\hi^*},
\end{equation}
where 
we use \eqref{v} and the estimate
\[
\int_{\hat I^*}^{I_1}\frac{\hat v(i)-\hat v(\hat I^*)}{i} \dd i\ge 0,
\]
which follows from the monotonicity of $\hat v$.
Relations  \eqref{al1}, \eqref{jadd} and $0<I_1<I_2 \leq 1$ imply that
\[
- \hi^* \ln\frac{I_1}{\hi^*} \leq 1 + \left(\frac{v_{max}}{\beta} - \frac{\mu(1-S^*)}{\beta S^*}\right)\ln\frac{I_2}{\hi^*},
\]
hence 
\begin{equation}\label{*1}
I_1 \geq \hi^* \exp{\left(-\frac{1}{\hi^*} - \frac{v_{max} S^*-\mu (1-S^*)}{\beta S^* {\hi^*}}\ln\frac{I_2}{\hi^*}\right)}.
\end{equation}
From relations \eqref{al1} and
\begin{equation}\label{LK}
\bar v(I)\le \hat v(I) \leq v_{nat} + K I
\end{equation}
(where \eqref{LK} follows from \eqref{LipP*})
it follows that 
\[
\frac{\mu(1-S^*)}{S^*} =\beta \hat I^*+\hat v(\hat I^*)\leq \beta \hat I^* +K \hat I^* +v_{nat},
\]
hence
\begin{equation}\label{*2}
\rho_0:=\frac{\mu(1-S^*)-v_{nat} S^* }{(\beta+K) S^*}\leq \hat I^*,
\end{equation}
where, due to assumption \eqref{ineq},
\[
 \rho_0>0.
\] 
%
%
If $v_{max} S^*\leq \mu (1-S^*)$, then relations \eqref{*1}, \eqref{*2} imply
\[
I_1 \geq \hi^*e^{-\frac{1}{\hi^*}} \geq \rho_0 e^{-\frac{1}{\rho_0}}.
\]
On the other hand, if $v_{max} S^* > \mu(1-S^*)$, then 
\begin{align*}
I_1  \geq  \hi^* \exp{\left(-\frac{\beta S^*+\bigr(v_{max}S^*-\mu(1-S^*)\bigl)\ln\frac{I_2}{\hi^*}}{ \beta S^*\hi^*}\right)} \\ \geq \rho_0 \exp{\left(-\frac{\beta S^*+\bigl(v_{max}S^*-\mu(1-S^*)\bigr)\ln\frac{1}{\rho_0}}{\beta S^* \rho_0}\right)}.
\end{align*}
Combining the two cases,
\begin{equation}\label{finalestimate}
I_1 \geq  \rho_1:= \rho_0    
\exp{\left(-\frac1\rho_0+ \frac{\ln{\rho_0}}{\beta S^* \rho_0}\big\lfloor v_{max}S^*-\mu(1-S^*)\big \rfloor_+ \right)
},
\end{equation}
where $\lfloor a\rfloor_+ = a$ for $a> 0$ and $\lfloor a\rfloor_+ = 0$ for $a\leq  0$.

\medskip
Finally, we obtain a lower bound for $S_m.$
From \eqref{nl6} and \eqref{al1} it follows that
\begin{align*}
-S^{*}  \ln \frac{S_m}{S^*} 
\le S^* -S_m +I_1- \hi^* -\hi^* \ln \frac{I_1}{\hi^*} - \frac{\hp(\hi^*)}{\beta}\ln \frac{I_1}{\hi^*} 
\\
\leq S^* - \frac{\mu  (1- S^*)}{\beta S^*} \ln \frac{I_1}{\hi^*}.
\end{align*}
Therefore,
\[ 
S_m \geq S^{*} \exp\left\{ -1 + \frac{\mu  (1- S^*)}{{\beta (S^*)}^{2}} \ln \frac{I_1}{\hi^*}\right\}
\]
and using \eqref{finalestimate} we arrive at
\begin{equation}\label{last}
S_m \geq S^{*} \exp\left\{ -1 + \frac{\mu  (1- S^*)}{{\beta (S^*)}^{2}} \ln \rho_1\right\}.
\end{equation}


Thus, we have shown that the existence of a simple periodic orbit implies estimates \eqref{j9}, \eqref{finalestimate} and \eqref{last}, which establish a lower bound $L\ge L_0>0$ on the quantity \eqref{L}. This completes the proof.

\section{Conclusions}

We considered an SIR model with vaccination, where we assumed that the vaccination rate changes in response to
dynamics of the epidemic. We modeled the adaptive response of an individual to the varying number of active cases
by a two-state two-threshold switch, and 
the aggregate response of the susceptible population by the Preisach operator.
This operator relationship between the vaccination rate and the number of active cases accounts for the heterogeneity of the response among the susceptible individuals.

If the basic reproduction number satisfies $R_0>1$, then the infection-free equilibrium is  globally stable.
On the other hand, if $R_0>1$, then the system has a connected infinite set of endemic equilibrium states. In this case, we showed that each trajectory converges either to an endemic equilibrium or to a periodic orbit. This is in agreement with Chladn\'a et al. (2020) where a simpler system with the homogeneous response modeled by a single two-state two-threshold switch was considered. Further, we showed that the set of endemic equilibrium states is the global attractor if a certain parameter of the Preisach operator, which is associated with the width of the hysteresis loops relative to their length, is sufficiently small. This parameter decreases with increasing heterogeneity of the adaptive response among the susceptible population. Based on these results, one can conclude that the heterogeneity of the response promotes the convergence to an endemic equilibrium state, while the homogeneous response may result in recurrent periodic outbreaks of the epidemic.






\end{document}